\documentclass[10pt,a4paper,english,oneside]{amsart}
\usepackage{graphicx}												\usepackage{hyperref}
\usepackage{amssymb}
\usepackage[all,cmtip]{xy}
\date{\today}
\author{$\mbox{Mattia Ornaghi and Laura Pertusi}$}
\address{Dipartimento di Matematica "F.\ Enriques", Universit\`a degli Studi di Milano, Via Cesare Saldini 50, Milano 20133, Italy}
\email{mattia.ornaghi@unimi.it, https://sites.google.com/view/mattiaornaghi}
\email{laura.pertusi@unimi.it, http://www.mat.unimi.it/users/pertusi/}
\textwidth=125 mm
\textheight=195 mm
\usepackage[english]{babel}
\usepackage{textcomp}
\usepackage[T1]{fontenc}
\usepackage{marvosym}
\usepackage{skull}
\usepackage{warning}
\usepackage{amstext}
\usepackage{etoolbox}
\usepackage{stmaryrd}
\pretocmd\mvchr{\text}{}{\errmessage{Patching \noexpand\mvchr failed}}
\pretocmd\textmvs{\text}{}{\errmessage{Patching \noexpand\textmvs failed}}

\usepackage{latexsym}
\usepackage{quoting}
\usepackage{guit}
\entrymodifiers={+!!<0pt,\fontdimen22\textfont2>}

\pagestyle{plain}
\title{Voevodsky's conjecture for cubic fourfolds and Gushel-Mukai fourfolds via noncommutative K3 surfaces}
\usepackage{tikz}
\usetikzlibrary{backgrounds,calc,shadings,shapes.arrows,shapes.symbols,shadows}
\usepackage{amsmath, amsfonts, amsthm, amscd}
\usepackage{mathrsfs}
\usepackage[all,cmtip]{xy}	
\xyoption{arrow}
\usepackage{dsfont}														 
\usetikzlibrary{svg.path}

\DeclareMathOperator{\p}{\mathbb{P}}
\DeclareMathOperator{\G}{\text{Gr}}

\begin{document}

\theoremstyle{plain}
\newtheorem{thm}{Theorem}[section]
\newtheorem*{namedthm}{Theorem}
\newtheorem*{A}{Theorem}
\newtheorem*{B}{Theorem}
\newtheorem*{C}{Theorem}
\newtheorem{lem}[thm]{Lemma}
\newtheorem{prp}[thm]{Proposition}
\newtheorem{cor}[thm]{Corollary}
\newtheorem*{KL}{Klein's Lemma}
\theoremstyle{definition}
\newtheorem{defn}{Definition}[section]
\newtheorem*{conj}{Conjecture}
\newtheorem{exmp}{Example}[section]
\newtheorem{exr}{Exercize}[section]
\newtheorem*{soa}{State of art}
\theoremstyle{remark}
\newtheorem{rem}{Remark}
\newtheorem{note}{Note}
\newtheorem{case}{Case}
\newtheorem{oss}{Osservazione}
\newtheorem{question}{Question}
\newtheorem{dig}{Digression}[section]

\maketitle

\newcommand\blfootnote[1]{%
  \begingroup
  \renewcommand\thefootnote{}\footnote{#1}%
  \addtocounter{footnote}{-1}%
  \endgroup
}

\blfootnote{\textup{2000} \textit{Mathematics Subject Classification}: \textup{14A22}, \textup{14C15}, \textup{14J35}}
\blfootnote{\textit{Key words and phrases}: Voevodsky's nilpotence conjecture, Non-commutative motives, Non-commutative algebraic geometry, Derived category, Cubic fourfolds, Gushel-Mukai fourfolds, Noncommutative K3 surfaces.}

\begin{abstract}
In the first part of this paper we will prove the Voevodsky's nilpotence conjecture for smooth cubic fourfolds and ordinary generic Gushel-Mukai fourfolds.\ Then, making use of noncommutative motives, we will prove the Voevodsky's nilpotence conjecture for generic Gushel-Mukai fourfolds containing a $\tau$-plane $\G(2,3)$ and for ordinary Gushel-Mukai fourfolds containing a quintic del Pezzo surface.
\end{abstract}


\section*{Introduction and statement of the results}

In 1995 Voevodsky conjectured the following statement for the algebraic cycles of a smooth projective $k$-scheme $X$: 
\begin{conj}[V]
$\mathcal{Z}^*_{\otimes_{\mbox{\tiny nil}}}(X)_F$ coincides with $\mathcal{Z}^*_{\otimes_{\mbox{\tiny num}}}(X)_F$.
\end{conj}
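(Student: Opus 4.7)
\emph{Strategy.} Since Voevodsky's conjecture is open in general, my plan targets the smooth projective fourfolds appearing in the title by passing first to noncommutative motives. The key input is the formalism of Tabuada and Bernardara--Marcolli--Tabuada, by which Voevodsky's nilpotence conjecture for a smooth projective $X$ is equivalent to its noncommutative counterpart for the dg-enhancement $\mathrm{perf}_{dg}(X)$, and the latter is additive with respect to admissible semi-orthogonal decompositions. This reduces the geometric problem to verifying the conjecture piece-by-piece in a Kuznetsov-type decomposition of $D^b(X)$.

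\emph{Reduction to the K3-type component.} For a smooth cubic fourfold $Y$, I would apply the decomposition $D^b(Y)=\langle \mathcal{A}_Y,\mathcal{O}_Y,\mathcal{O}_Y(1),\mathcal{O}_Y(2)\rangle$; the three exceptional objects contribute Tate summands on which nilpotent and numerical equivalence trivially agree, so all the content is concentrated in the K3-type component $\mathcal{A}_Y$. The same template applies to an ordinary GM fourfold $X$: after stripping off the standard exceptional/$\mathbb{P}^2$-bundle summands of $D^b(X)$ one is reduced to proving the noncommutative conjecture for the K3-type Kuznetsov component $\mathcal{A}_X$.

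\emph{Handling $\mathcal{A}_Y$ and $\mathcal{A}_X$.} For the specific geometric families singled out in the abstract --- generic GM fourfolds containing a $\tau$-plane $\G(2,3)$ and ordinary GM fourfolds containing a quintic del Pezzo surface --- there are known Fourier--Mukai equivalences $\mathcal{A}_X\simeq D^b(S)$ with $S$ an honest K3 surface. Since K3 surfaces are Kimura-finite (Guletskii--Pedrini), Voevodsky's conjecture is known for them, and the noncommutative version transports across the equivalence, settling those cases. For the smooth cubic fourfold and the ordinary generic GM fourfold I would instead try to relate the rational motive to a Kimura-finite hyperk\"ahler manifold of $K3^{[2]}$-type --- the Fano variety of lines on the cubic, respectively the double EPW sextic associated to the GM fourfold --- and conclude via additivity of nilpotence under the resulting motivic decomposition.

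\emph{Main obstacle.} The delicate step, in my view, is the generic case: the Kuznetsov component is \emph{not} equivalent to $D^b$ of a K3 surface, so one cannot argue by direct transport of a categorical equivalence. One needs to upgrade the geometric/Chow-motivic comparison with the associated hyperk\"ahler variety into compatible data at the level of noncommutative motives, so that the K3-type piece of $D^b(X)$ can be treated \emph{as if} it were geometric. Verifying this compatibility, and controlling how the nilpotence property descends through the semi-orthogonal decomposition, is the part I expect to require the most care.
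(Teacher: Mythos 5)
Your proposal and the paper part ways at the very first step, and the divergence matters. The paper does \emph{not} prove conjecture V for cubic fourfolds and ordinary generic GM fourfolds by passing to noncommutative motives; it argues entirely inside $\mbox{Chow}(k)$. For a cubic fourfold $X$ one blows up a line $l\subset X$ and the projection from $l$ gives a smooth flat conic fibration $\mbox{Bl}_l(X)\to\mathbb{P}^3$ (Lemma \ref{IN}); for an ordinary generic GM fourfold one blows up the degree-$4$ del Pezzo surface $E=\G(2,V_4)\cap X$ and again obtains a smooth flat conic fibration over $\mathbb{P}^3$ (Corollary \ref{fibrazione}). Vial's decomposition for flat morphisms (Subsection \ref{subsec:c}) then yields $\mathfrak{h}(\mbox{Bl}(X))\simeq\bigoplus_{k=0}^{1}\mathfrak{h}(\mathbb{P}^3)(k)\oplus(Z,r,1)$ with $\mbox{dim}\,Z=2$; conjecture V holds for Tate summands and for surfaces, hence for the blow-up, and the blow-up formula (Subsection \ref{subsec:b}), applicable because the center has dimension $\le 2$, descends it to $X$. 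The noncommutative statement $\mbox{V}_{\tiny\mbox{nc}}(\mathcal{A}_X^{\tiny\mbox{dg}})$ (Theorem (B)) is in the paper a \emph{consequence} of this, via the BMT equivalence and additivity over the semiorthogonal decomposition --- exactly the reverse of your logical flow.

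This inversion is where your proposal has a genuine gap. The reduction to $\mbox{V}_{\tiny\mbox{nc}}(\mathcal{A}_X^{\tiny\mbox{dg}})$ is sound, but your proposed treatment of that statement in the two main cases --- a general smooth cubic fourfold and an ordinary generic GM fourfold --- rests on open problems. Kimura-finiteness (and Voevodsky's conjecture) is not known for hyperk\"ahler fourfolds of $\mathrm{K3}^{[2]}$-type such as the Fano variety of lines or the double EPW sextic; it is in fact open even for a general K3 surface, the Guletskii--Pedrini results covering only special Picard ranks. (Conjecture V holds for surfaces for a different reason: numerically trivial cycles on a surface are algebraically trivial, hence smash-nilpotent by \cite{Voe}, \cite{Voi}; no finite-dimensionality is needed.) So in the generic case, which you yourself flag as the ``main obstacle,'' your argument reduces $\mbox{V}(X)$ to a conjecture at least as hard as the one being proved, whereas the paper's fibration argument closes it unconditionally. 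Two smaller corrections: for a generic GM fourfold containing a $\tau$-plane, the known Fourier--Mukai equivalence of \cite{KuPe} identifies $\mathcal{A}_X$ with the Kuznetsov component $\mathcal{A}_{X'}$ of a smooth \emph{cubic fourfold}, not with $\mathcal{D}^b$ of a K3 surface --- only the quintic del Pezzo case produces an honest K3; and transporting $\mbox{V}_{\tiny\mbox{nc}}$ across such equivalences requires a dg lift of the Fourier--Mukai functor, a point the paper handles explicitly in the proofs of Theorems (C) and (D).
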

Here, $\mathcal{Z}^*(X)_F$ denotes the group of algebraic cycles of $X$, $\otimes_{\mbox{\tiny nil}}$ denotes the smash-nilpotence equivalence relation on  $\mathcal{Z}^*(X)_F$, introduced in \cite{Voe}, and $\otimes_{\mbox{\tiny num}}$ denotes the classical numerical equivalence relation on $\mathcal{Z}^*(X)_F$ (see $\S1$).\\
Voevodsky's nilpotence conjecture was proven for curves, surfaces, abelian threefolds, uniruled threefolds, quadric fibrations, intersection of quadrics, linear sections of Grassmannians, linear sections of determinantal varieties, homological projective duals and Moishenzon manifolds.\\

In 2014, motivated by the above conjecture, Bernardara, Marcolli and Tabuada stated the following conjecture for a smooth and proper dg category $\mathcal{A}$:
\begin{conj}[$\mbox{V}_{\tiny\mbox{nc}}$]
$\mbox{K}_0(\mathcal{A})/_{\sim\otimes_{\tiny\mbox{nil}}}$ is equal to $\mbox{K}_0(\mathcal{A})/_{\sim\otimes_{\tiny\mbox{num}}}$.
\end{conj}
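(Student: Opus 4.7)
Conjecture $V_{nc}$ is a noncommutative analogue of Voevodsky's nilpotence conjecture and is open in full generality; the plan is therefore not to attempt it for arbitrary smooth and proper $\mathcal{A}$, but to identify a class of dg categories for which it can be verified and then transfer the conclusion back to the classical conjecture $V$ for the fourfolds of interest via the dictionary of noncommutative motives.

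First I would invoke the result of Bernardara--Marcolli--Tabuada that $V_{nc}$ is compatible with semi-orthogonal decompositions: if $\mathcal{A}=\langle \mathcal{A}_1,\ldots,\mathcal{A}_n\rangle$, then $V_{nc}$ holds for $\mathcal{A}$ if and only if it holds for each $\mathcal{A}_i$. Coupled with the fact that $V_{nc}$ for $\mathrm{perf}(X)$ is equivalent to $V$ for $X$, this reduces the classical conjecture for a cubic fourfold or Gushel--Mukai fourfold $X$ with decomposition $D^b(X)=\langle \mathcal{K}u(X),E_1,\ldots,E_r\rangle$ to $V_{nc}$ for the Kuznetsov component $\mathcal{K}u(X)$, since the exceptional blocks $\langle E_i\rangle\simeq D^b(\mathrm{pt})$ satisfy $V_{nc}$ tautologically.

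Second, one verifies $V_{nc}$ for $\mathcal{K}u(X)$ whenever an associated (possibly twisted) K3 surface is available. For Hassett-special cubic fourfolds, generic GM fourfolds containing a $\tau$-plane $\G(2,3)$, and ordinary GM fourfolds containing a quintic del Pezzo surface, there is an equivalence $\mathcal{K}u(X)\simeq D^b(S,\alpha)$ with $S$ a K3 surface. Since Voevodsky's conjecture is known for surfaces and behaves well under Brauer twisting, $V_{nc}$ holds for $D^b(S,\alpha)$ and hence for $\mathcal{K}u(X)$; the semi-orthogonal compatibility of $V_{nc}$ then yields $V$ for $X$.

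The main obstacle lies in the truly generic case---smooth cubic fourfolds and ordinary generic GM fourfolds---where no associated commutative K3 surface is available and the reduction just described fails. Handling those cases requires a direct attack on $V$ for $X$ itself, exploiting specific geometric features such as an explicit Chow--K\"unneth decomposition or finite-dimensionality of the motive of $X$ in the sense of Kimura--O'Sullivan, rather than formal manipulations at the level of $\mathcal{K}u(X)$. This is the step I expect to be technically hardest, and it is presumably what forces the first part of the paper to proceed independently of the noncommutative formalism developed later.
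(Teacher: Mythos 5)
Your overall architecture --- the BMT dictionary $V(X)\Leftrightarrow V_{\mathrm{nc}}(\mathrm{perf}_{\mathrm{dg}}(X))$, compatibility of $V_{\mathrm{nc}}$ with semiorthogonal decompositions, and reduction to the Kuznetsov/GM component --- is exactly the skeleton the paper uses for its Theorems (B), (C) and (D). But one step of your plan is factually wrong, and another is left open precisely where the paper's real work lies. For a generic GM fourfold containing a $\tau$-plane $\G(2,3)$, you invoke an equivalence $\mathcal{K}u(X)\simeq D^b(S,\alpha)$ with a twisted K3 surface; no such equivalence is available in that case. What Kuznetsov--Perry prove (and what the paper's Theorem (C) actually uses) is an equivalence of Fourier--Mukai type $\mathcal{A}_X\simeq\mathcal{A}_{X'}$, where $X'$ is an auxiliary \emph{smooth cubic fourfold} produced by identifying the blow-up of $X$ along the $\tau$-plane with the blow-up of $X'$ along a cubic surface scroll; the Fourier--Mukai property guarantees a dg lift, so the equivalence descends to noncommutative motives. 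Since the Kuznetsov category of a generic cubic fourfold is not equivalent to the derived category of any (twisted) K3, your reduction for this case collapses back onto the case you yourself flag as the main obstacle: one still needs $V_{\mathrm{nc}}(\mathcal{A}_{X'}^{\mathrm{dg}})$ for an arbitrary smooth cubic fourfold. (Your treatment of the quintic del Pezzo case is correct: there $\mathcal{A}_X\simeq D^b(Y)$ with $Y$ an honest K3, and the conjecture follows.)

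The second issue is that you leave the core cases --- all smooth cubic fourfolds and ordinary generic GM fourfolds --- as an acknowledged open step, gesturing at Kimura--O'Sullivan finite-dimensionality or a Chow--K\"unneth decomposition. The paper closes this gap with a concrete and rather elementary geometric device: project from a line $l\subset X$ (resp., for an ordinary generic GM fourfold, from the surface $E=\G(2,V_4)\cap X$, which is shown to be a smooth degree-$4$ del Pezzo surface for generic $X$). The projection realizes $\mathrm{Bl}_l(X)$ (resp.\ $\mathrm{Bl}_E(X)$) as a smooth flat conic fibration over $\mathbb{P}^3$, and Vial's decomposition theorem for flat fibrations gives $\mathfrak{h}(\mathrm{Bl}(X))\simeq\bigoplus_{i=0}^{1}\mathfrak{h}(\mathbb{P}^3)(i)\oplus(Z,r,1)$ with $\dim Z=2$; since conjecture $V$ holds for surfaces and for projective space, it holds for the blow-up, hence for $X$ by the blow-up formula for Chow motives. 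This is Theorem (A), and Theorem (B) --- $V_{\mathrm{nc}}$ for the noncommutative K3 $\mathcal{A}_X^{\mathrm{dg}}$ --- is \emph{deduced from it} via the semiorthogonal decomposition, not the other way around. So the paper's logical flow is geometry $\Rightarrow V(X)\Rightarrow V_{\mathrm{nc}}(\mathcal{A}_X^{\mathrm{dg}})$, and it is this last statement, applied to the cubic $X'$, that then settles the $\tau$-plane case your plan cannot reach.
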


Here, $K_0(\mathcal{A})$ denotes the Grothendieck group of the full subcategory $\mathcal{D}_{\mbox{c}}(\mathcal{A})$ of compact objects of the derived category of $\mathcal{A}$, $\sim\otimes_{\tiny\mbox{nil}}$ and $\sim\otimes_{\tiny\mbox{num}}$ denote two equivalence relations on the Grothendieck group $K_0(\mathcal{A})$, as explained in $\S4$.\\

They also reformulate Voevodsky's conjecture in the following way:
\begin{namedthm}[{BMT}]\label{BMT1}
Let $X$ be a smooth projective $k$-scheme. The conjecture \emph{$\mbox{V}(X)$} is equivalent to the conjecture \emph{{$\mbox{V}_{\tiny\mbox{nc}}(\mbox{perf}_{\tiny\mbox{dg}}(X))$}}.
\end{namedthm}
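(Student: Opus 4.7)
The plan is to transport both equivalence relations across the Chern character. Since $X$ is smooth projective, $\mathcal{D}_{c}(\mathrm{perf}_{\mathrm{dg}}(X))$ coincides with $\mathcal{D}^{b}(\mathrm{coh}(X))$, so $K_{0}(\mathrm{perf}_{\mathrm{dg}}(X))$ is the usual Grothendieck group $K_{0}(X)$; the external product gives the analogous identifications $K_{0}(\mathrm{perf}_{\mathrm{dg}}(X)^{\otimes n})\cong K_{0}(X^{n})$. After tensoring with $F\supseteq\mathbb{Q}$, the Chern character provides a ring isomorphism $\mathrm{ch}\colon K_{0}(X)_{F}\xrightarrow{\sim}\mathcal{Z}^{*}(X)_{F}$ (modulo rational equivalence on the right) which is compatible with external products.

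For the nilpotent relations, $[\alpha]\in K_{0}(X)_{F}$ satisfies $[\alpha]\sim_{\otimes_{\mathrm{nil}}}0$ iff $[\alpha]^{\boxtimes n}=0$ in $K_{0}(X^{n})_{F}$ for some $n\geq 1$. Since $\mathrm{ch}([\alpha]^{\boxtimes n})=\mathrm{ch}([\alpha])^{\boxtimes n}$ and $\mathrm{ch}$ is bijective on every product $X^{n}$, this is equivalent to $\mathrm{ch}([\alpha])^{\boxtimes n}=0$ in $\mathcal{Z}^{*}(X^{n})_{F}$, i.e.\ smash-nilpotence of $\mathrm{ch}([\alpha])$. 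Hence $\mathrm{ch}$ descends to an isomorphism between $K_{0}(X)_{F}$ modulo $\sim_{\otimes_{\mathrm{nil}}}$ and $\mathcal{Z}^{*}(X)_{F}$ modulo $\otimes_{\mathrm{nil}}$.

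For numerical equivalence, the noncommutative relation is governed by the Euler pairing $\chi([\alpha]\otimes[\beta])=\sum_{i}(-1)^{i}\dim\mathrm{Ext}^{i}(\alpha,\beta)$. By Grothendieck--Riemann--Roch this equals $\int_{X}\mathrm{ch}([\alpha]^{\vee})\cdot\mathrm{ch}([\beta])\cdot\mathrm{td}(X)$. Since $\mathrm{td}(X)$ is a unit in $\mathcal{Z}^{*}(X)_{F}$ and $\mathrm{ch}$ is surjective, the vanishing of $\chi([\alpha]\otimes[\beta])$ for every $[\beta]$ is equivalent to the vanishing of the classical intersection pairing of $\mathrm{ch}([\alpha])$ against every cycle class, i.e.\ the classical numerical equivalence of $\mathrm{ch}([\alpha])$. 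Combining this with the previous step yields $V(X)\Leftrightarrow V_{\mathrm{nc}}(\mathrm{perf}_{\mathrm{dg}}(X))$.

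The main anticipated obstacle is the mismatch between the ungraded $K_{0}(X)$ and the codimension-graded $\mathcal{Z}^{*}(X)$, together with the interference of the Todd class in the Euler pairing. Both are handled by tensoring with $F\supseteq\mathbb{Q}$: then $\mathrm{ch}$ becomes a filtered isomorphism, intertwining the gamma filtration on $K_{0}(X)_{F}$ with the codimension filtration on $\mathcal{Z}^{*}(X)_{F}$, and $\mathrm{td}(X)$ becomes invertible, so that both conjectures can be compared codimension by codimension. Beyond this, the rest of the argument is essentially a translation exercise between the two languages.
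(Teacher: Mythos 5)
The paper itself contains no proof of this statement: Theorem (BMT) is quoted from \cite{BMT}, and the only supporting material given is the commutative diagram of Section 3.5, where the fully faithful functor $\Phi$ from the orbit category $\mbox{Chow}(k)_F/_{-\otimes\mathds{1}(1)}$ into $\mbox{NChow}(k)_F$ identifies $\mathfrak{h}(X)$, up to Tate twists, with $U(\mbox{perf}_{\tiny\mbox{dg}}(X))$; in \cite{BMT} the equivalence of the two conjectures is then obtained by checking that the $\otimes$-nilpotence and numerical ideals correspond under $\Phi$. Your proposal unwinds this categorical argument into a direct computation on $X$ itself: identify $K_0(\mbox{perf}_{\tiny\mbox{dg}}(X)^{\otimes n})$ with $K_0(X^n)$ via the Morita equivalence $\mbox{perf}_{\tiny\mbox{dg}}(X)^{\otimes n}\simeq\mbox{perf}_{\tiny\mbox{dg}}(X^n)$, transport $\otimes$-nilpotence through the Chern character isomorphism (which is compatible with external products), and transport numerical equivalence through Hirzebruch--Riemann--Roch together with the invertibility of the Todd class. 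This is a legitimate and more self-contained route, and your restriction to $F\supseteq\mathbb{Q}$ is consistent with the cited result, since the bridge functor $\Phi$ also requires rational coefficients.

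There is, however, a concrete gap in the final step (``combining this with the previous step yields\ldots''), and the fix you anticipate does not address it. Conjecture $V(X)$ is a statement about homogeneous cycles, graded by codimension, whereas the statement you obtain by transporting $V_{\mathrm{nc}}$ along $\mathrm{ch}$ concerns arbitrary inhomogeneous elements of $\mbox{CH}^*(X)_F$. Observing that $\mathrm{ch}$ intertwines the gamma filtration with the codimension filtration does not close this gap: the problem is not whether $\mathrm{ch}$ respects gradings, but whether the two equivalence relations do, and nothing in your argument shows, for instance, that smash-nilpotence of $\sum_d\beta_d$ forces smash-nilpotence of each $\beta_d$, so the conjectures cannot simply be ``compared codimension by codimension.'' What you actually need are two lemmas: (a) an inhomogeneous cycle is numerically trivial if and only if each homogeneous component is, because pairing against homogeneous cycles of complementary codimension isolates the components; and (b) sums of smash-nilpotent cycles are smash-nilpotent (this is standard --- it is what makes $\otimes_{\mbox{\tiny nil}}$ an adequate equivalence relation --- and follows by expanding $(\alpha+\beta)^{\otimes(m+n)}$ and noting that every summand contains, after permuting the factors of $X^{m+n}$, either $\alpha^{\otimes m}$ or $\beta^{\otimes n}$ as an external factor). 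With (a) and (b) the two statements are equivalent: if $V(X)$ holds and $[\alpha]\in K_0(X)_F$ is numerically trivial, then each component of $\mathrm{ch}([\alpha])$ is numerically trivial by (a), hence smash-nilpotent by $V(X)$, hence the sum is smash-nilpotent by (b), so $[\alpha]$ is $\otimes$-nilpotent; conversely, if $V_{\mathrm{nc}}$ holds and $\beta_d\in\mbox{CH}^d(X)_F$ is numerically trivial, apply the transported statement to $\mathrm{ch}^{-1}(\beta_d)$. Once these two lemmas are inserted, your proof is complete.
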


Here, $\mbox{perf}_{\tiny\mbox{dg}}(X)$ denotes the unique enhancement of the derived category of perfect complexes on $X$. Making use of noncommutative motives, Voevodsky's conjecture was proven for example for quadric fibrations, intersection of quadrics, linear sections of Grassmannians, etc.\ (see \cite{BMT} and \cite{BeTa} for details).\\

The first result of this paper is the proof of Voevodsky's conjecture for cubic fourfolds and ordinary generic Gushel-Mukai fourfolds. We recall that a cubic fourfold is a smooth complex hypersurface of degree 3 in $\mathbb{P}^5$, while a Gushel-Mukai fourfold is a smooth and transverse intersection of the form $\mbox{Cone}(\mbox{Gr$(2,V_5)$})\cap\mbox{Q}$, where $\mbox{Q}$ is a quadric hypersurface in $\mathbb{P}^8$.
\begin{namedthm}[A]
\label{A}
Let $X$ be a cubic fourfold or an ordinary generic Gushel-Mukai fourfold; then the conjecture $V(X)$ holds.
\end{namedthm}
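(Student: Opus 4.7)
The plan is to convert Voevodsky's conjecture $V(X)$ to its noncommutative analogue via Theorem BMT, reduce through the Kuznetsov semiorthogonal decomposition to $V_{nc}(\mathcal{K}u(X))$, and then realize $\mathcal{K}u(X)$ as a semiorthogonal summand of the derived category of a smooth variety admitting a conic (respectively quadric) bundle structure to which Bernardara--Tabuada's result on $V$ for quadric fibrations applies. Importantly, this route does \emph{not} pass through an equivalence $\mathcal{K}u(X)\simeq \mathrm{perf}_{dg}(S)$ for an actual K3 surface, so it is available for arbitrary smooth cubic fourfolds and for ordinary generic Gushel--Mukai fourfolds.

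By Theorem BMT, $V(X) \Leftrightarrow V_{nc}(\mathrm{perf}_{dg}(X))$. For a cubic fourfold one has the Kuznetsov decomposition
$$\mathrm{perf}_{dg}(X) = \langle \mathcal{K}u(X),\,\mathcal{O}_X,\,\mathcal{O}_X(1),\,\mathcal{O}_X(2)\rangle,$$
and for an ordinary generic Gushel--Mukai fourfold the Kuznetsov--Perry decomposition
$$\mathrm{perf}_{dg}(X) = \langle \mathcal{K}u(X),\,\mathcal{O}_X,\,\mathcal{U}_X^\vee,\,\mathcal{O}_X(1),\,\mathcal{U}_X^\vee(1)\rangle.$$
Since $V_{nc}$ is additive under semiorthogonal decompositions and trivially holds on the derived category of a point, the problem reduces in both cases to $V_{nc}(\mathcal{K}u(X))$.

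For the cubic fourfold, I would then use the classical projection from a line $\ell\subset X$. Every smooth cubic fourfold contains lines, and for a general $\ell$ the blow-up $\widetilde X = \mathrm{Bl}_\ell X$ is smooth and the projection extends to a flat conic fibration $\pi\colon \widetilde X \to \mathbb{P}^3$, whose fibers are the residual conics to $\ell$ in the plane sections of $X$ through $\ell$. Orlov's blow-up formula gives
$$\mathrm{perf}_{dg}(\widetilde X) \simeq \langle \mathrm{perf}_{dg}(\ell),\,\mathrm{perf}_{dg}(\ell),\,\mathrm{perf}_{dg}(X)\rangle,$$
while Kuznetsov's quadric bundle theorem yields
$$\mathrm{perf}_{dg}(\widetilde X) \simeq \langle \mathrm{perf}_{dg}(\mathbb{P}^3,\mathcal{C}_0),\,\mathrm{perf}_{dg}(\mathbb{P}^3)\rangle,$$
with $\mathcal{C}_0$ the sheaf of even parts of the Clifford algebra of $\pi$. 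Bernardara--Tabuada's result on $V_{nc}$ for quadric fibrations, together with the trivial case $V(\mathbb{P}^3)$, gives $V_{nc}(\mathrm{perf}_{dg}(\widetilde X))$; additivity applied to the first decomposition then produces $V_{nc}(\mathrm{perf}_{dg}(X))$, and hence $V(X)$. For an ordinary generic Gushel--Mukai fourfold, I would seek an analogous quadric/conic fibration structure by projecting from a suitable subvariety (a line, a $\sigma$-conic, or a section of the Gushel bundle) whose existence and good behaviour are ensured by the ``ordinary generic'' hypothesis, and then run the same Orlov--Kuznetsov--additivity chain relative to a base for which $V$ is already established (e.g.\ a projective space or a linear section of $\mathrm{Gr}(2,5)$).

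The main obstacle is geometric. For the cubic fourfold the projection from a line is classical, and for a general line the required checks (smoothness of $\widetilde X$, flatness of $\pi$, admissibility of $\pi$ within Bernardara--Tabuada's quadric-fibration framework) are standard. For the Gushel--Mukai case, identifying the right subvariety to project from, verifying that the resulting fibration sits inside the quadric-fibration framework, and controlling its discriminant is the delicate point, and it is precisely here that the ``ordinary generic'' hypothesis in the statement must be invoked.
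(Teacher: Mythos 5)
Your formal reductions are all sound: Theorem BMT, additivity of $\mbox{V}_{\tiny\mbox{nc}}$ over semiorthogonal decompositions, Orlov's blow-up formula, and Kuznetsov's Clifford-algebra decomposition of a flat conic fibration, and your geometry (projection from a line, flat conic fibration $\mathrm{Bl}_l(X)\to\mathbb{P}^3$) is exactly the paper's Lemma \ref{IN}. The gap is the decisive last step: the Bernardara--Marcolli--Tabuada results on Voevodsky's conjecture for quadric fibrations are proved over low-dimensional bases (curves and surfaces) or under smoothness/simple-degeneration hypotheses on the discriminant, which allow $\mathrm{perf}(B,\mathcal{C}_0)$ to be identified with (possibly twisted) sheaves on a nice cover. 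Neither hypothesis holds here: the discriminant of the conic fibration from a cubic fourfold is a quintic surface in $\mathbb{P}^3$ that is singular at every point whose fiber is a double line, and since the rank-one locus has codimension $3$ in the space of conics, such points occur for \emph{every} choice of $l$ --- they cannot be removed by genericity. Worse, comparing your two decompositions of $\mathrm{perf}_{\mathrm{dg}}(\mathrm{Bl}_l(X))$ shows that $U(\mathrm{perf}_{\mathrm{dg}}(\mathbb{P}^3,\mathcal{C}_0))$ and $U(\mathcal{A}_X^{\mathrm{dg}})$ agree up to copies of $U(\mathbb{C})$, so $\mbox{V}_{\tiny\mbox{nc}}(\mathrm{perf}_{\mathrm{dg}}(\mathbb{P}^3,\mathcal{C}_0))$ is equivalent to $\mbox{V}_{\tiny\mbox{nc}}(\mathcal{A}_X^{\mathrm{dg}})$, i.e.\ to the very statement at issue: the step you call ``standard'' is not merely unreferenced but circular in content. (Had the quadric-fibration theorems of \cite{BMT} or \cite{BeTa} covered conic fibrations over $\mathbb{P}^3$, conjecture V for all cubic fourfolds would already have followed from those papers; the present paper explicitly claims Theorem (A) is new.)

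The paper closes precisely this hole by never leaving Chow motives: Vial's theorem (Subsection \ref{subsec:c}, \cite{Via}) requires only flatness and $\mathrm{CH}_0$ of every geometric fiber equal to $\mathbb{Q}$ --- true for arbitrarily degenerate conics, double lines included, with no condition on the discriminant --- and yields $\mathfrak{h}(\mathrm{Bl}_l(X))\simeq\mathfrak{h}(\mathbb{P}^3)\oplus\mathfrak{h}(\mathbb{P}^3)(1)\oplus(Z,r,1)$ with $Z$ a surface; since V holds for surfaces and for Tate-type summands, $\mbox{V}(\mathrm{Bl}_l(X))$ follows, and Manin's blow-up formula (Subsection \ref{subsec:b}) descends it to $X$. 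Theorem BMT and noncommutative motives are not used for Theorem (A) at all (they enter only later, for Theorems (B)--(D)), so your detour through $\mathcal{A}_X$ is harmless but unnecessary. Finally, your Gushel--Mukai case is a plan rather than an argument (``I would seek an analogous fibration''): the actual construction restricts the quadric fibration $\rho\colon\p_X(\mathscr{U}_X)\to\p(V_5)$ of \cite{DeKu} to a generic hyperplane $\p(V_4)$ missing the special fiber, identifies the restriction with $\mathrm{Bl}_E(X)$ where $E=\G(2,V_4)\cap X$ is a degree-four del Pezzo surface, and proves $E$ smooth via Bertini together with the Debarre--Kuznetsov equivalence between generic GM varieties and generic GM data (Lemma \ref{lemmanuovo}, Corollary \ref{fibrazione}); this is exactly where the ``ordinary generic'' hypothesis enters, and it is the content your sketch defers.
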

In order to prove this conjecture, we use the decomposition in rational Chow motives of a flat morphism computed by Vial in \cite{Via}.\\

Then, making use of noncommutative motives, we prove the noncommutative version of Voevodsky's nilpotence conjecture for the Kuznetsov category of a cubic fourfold and the GM category of an ordinary generic Gushel-Mukai fourfold. Indeed, we recall from \cite{Kuz} that the derived category of a cubic fourfold $X$ has a semiorthogonal decomposition of the form $\mbox{perf}(X)=\langle\mathcal{A}_X,\mathcal{O},\mathcal{O}(H),\mathcal{O}(2H)\rangle$. Here, the line bundles $\mathcal{O}$, $\mathcal{O}(H)$ and $\mathcal{O}(2H)$ are exceptional objects and $\mathcal{A}_X$ is a noncommutative K3 surface in the sense of Kontsevich. Analogously, in \cite{KuPe} they proved that the derived category of a Gushel-Mukai fourfold $X$ admits a 
semiorthogonal decomposition with four exceptional objects and a non-trivial part $\mathcal{A}_X$; again, the subcategory $\mathcal{A}_X$ is a noncommutative K3 surface in the sense of Kontsevich (see $\S5$).\\ 

Let $X$ be a cubic fourfold or a Gushel-Mukai fourfold; we denote by $\mathcal{A}_X^{\tiny\mbox{dg}}$ the dg enhancement of the category $\mathcal{A}_X$ induced from $\mbox{perf}_{\tiny\mbox{dg}}(X)$. Our second result is the proof of conjecture $\mbox{V}_{\tiny\mbox{nc}}$ for $\mathcal{A}_X^{\tiny\mbox{dg}}$ as explained below. 
 
\begin{namedthm}[B]
\label{B}
Let $X$ be a cubic fourfold or an ordinary generic Gushel-Mukai fourfold; then \emph{$\mbox{V}_{\tiny\mbox{nc}}(\mathcal{A}_X^{\tiny\mbox{dg}})$} holds.
\end{namedthm}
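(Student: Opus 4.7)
The strategy is to reduce Theorem B to Theorem A via Theorem BMT and the compatibility of $\mbox{V}_{\tiny\mbox{nc}}$ with semiorthogonal decompositions. First, Theorem A yields $\mbox{V}(X)$, and Theorem BMT then upgrades this to $\mbox{V}_{\tiny\mbox{nc}}(\mbox{perf}_{\tiny\mbox{dg}}(X))$. The task that remains is to transfer this equality from the ambient dg category of $X$ to its noncommutative K3 component $\mathcal{A}_X^{\tiny\mbox{dg}}$.

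To do so, I would lift the Kuznetsov decomposition $\mbox{perf}(X)=\langle\mathcal{A}_X,\mathcal{O},\mathcal{O}(H),\mathcal{O}(2H)\rangle$ (or its Gushel-Mukai analogue, with four exceptional objects) to a dg semiorthogonal decomposition
$$\mbox{perf}_{\tiny\mbox{dg}}(X)=\langle\mathcal{A}_X^{\tiny\mbox{dg}},\mbox{perf}_{\tiny\mbox{dg}}(k),\mbox{perf}_{\tiny\mbox{dg}}(k),\mbox{perf}_{\tiny\mbox{dg}}(k)\rangle.$$
Since $K_0$ is an additive invariant of dg categories, this induces a direct sum decomposition $K_0(\mbox{perf}_{\tiny\mbox{dg}}(X))\cong K_0(\mathcal{A}_X^{\tiny\mbox{dg}})\oplus\mathbb{Z}^{\oplus 4}$. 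The crucial input from the noncommutative-motives framework of \cite{BMT} and \cite{BeTa} is that the tensor ideals defining both $\sim\otimes_{\tiny\mbox{nil}}$ and $\sim\otimes_{\tiny\mbox{num}}$ respect such semiorthogonal decompositions, so that passing to the quotients distributes over the direct sum. On each factor $\mbox{perf}_{\tiny\mbox{dg}}(k)$ the two equivalence relations collapse to ordinary equality on $\mathbb{Z}$, and therefore the equality of $K_0(\mbox{perf}_{\tiny\mbox{dg}}(X))/_{\sim\otimes_{\tiny\mbox{nil}}}$ with $K_0(\mbox{perf}_{\tiny\mbox{dg}}(X))/_{\sim\otimes_{\tiny\mbox{num}}}$ descends to the $\mathcal{A}_X^{\tiny\mbox{dg}}$ summand, which is precisely the statement $\mbox{V}_{\tiny\mbox{nc}}(\mathcal{A}_X^{\tiny\mbox{dg}})$.

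The main obstacle I anticipate is the additivity step: one must check carefully that both equivalence relations on $K_0$ are compatible with the splitting coming from the exceptional semiorthogonal pieces, which amounts to an additivity property for noncommutative Chow motives and noncommutative numerical motives in the presence of full exceptional objects. Once this is in place the argument is essentially formal, and all the substantive content has already been absorbed into Theorem A through Vial's decomposition of rational Chow motives.
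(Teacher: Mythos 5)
Your proposal is correct and follows essentially the same route as the paper: the authors likewise combine Theorem (A) with Theorem (BMT) and the additivity of noncommutative motives under semiorthogonal decompositions, writing $U(\mbox{perf}_{\tiny\mbox{dg}}(X))=U(\mathcal{A}_X^{\tiny\mbox{dg}})\oplus U(\mathbb{C})\oplus U(\mathbb{C})\oplus U(\mathbb{C})$ in $\mbox{NChow}(k)$ (citing \cite{BeTa} for this splitting), so that $\mbox{V}_{\tiny\mbox{nc}}$ descends to the summand $\mathcal{A}_X^{\tiny\mbox{dg}}$, exactly your reduction. The only slip is your count of exceptional summands: the cubic fourfold decomposition contributes $\mathbb{Z}^{\oplus 3}$ (three exceptional line bundles), the $\mathbb{Z}^{\oplus 4}$ being the Gushel-Mukai count.
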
 

Roughly speaking, we prove the noncommutative version of Voevodsky's nilpotence conjecture for some noncommutative K3 surfaces.\\

An application of the part of Theorem (B) concerning cubic fourfolds is the following result, which states the Voevodsky's conjecture for generic Gushel-Mukai fourfolds containing a $\tau$-plane. 

\begin{namedthm}[C]
Let $X$ be a generic Gushel Mukai fourfold containing a plane $P$ of type $\emph{Gr}(2,3)$; then \emph{$V_{\tiny\mbox{nc}}(\mbox{perf}_{\tiny\mbox{dg}}(X))$} holds.
\end{namedthm}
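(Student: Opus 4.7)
The plan is to reduce Theorem (C) to the cubic fourfold case of Theorem (B) via the Kuznetsov--Perry equivalence for GM fourfolds containing a $\tau$-plane. The starting point is the semiorthogonal decomposition
\[\mbox{perf}_{\tiny\mbox{dg}}(X)=\langle\mathcal{A}_X^{\tiny\mbox{dg}},E_1,E_2,E_3,E_4\rangle,\]
where each $E_i$ is an exceptional object, so that $\langle E_i\rangle$ is dg Morita equivalent to $\mbox{perf}_{\tiny\mbox{dg}}(k)$, for which $\mbox{V}_{\tiny\mbox{nc}}$ holds trivially since $K_0(k)=\mathbb{Z}$. Invoking the additivity of $\mbox{V}_{\tiny\mbox{nc}}$ under semiorthogonal decompositions proved by Bernardara--Marcolli--Tabuada, the problem then reduces to verifying $\mbox{V}_{\tiny\mbox{nc}}(\mathcal{A}_X^{\tiny\mbox{dg}})$.

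Next I would apply the theorem of Kuznetsov--Perry on GM fourfolds containing a plane of type $\mbox{Gr}(2,3)$: for a generic such $X$ there exists a smooth cubic fourfold $Y$ together with a Fourier--Mukai type equivalence $\mathcal{A}_X\simeq\mathcal{A}_Y$ of K3 categories. Such a Fourier--Mukai equivalence lifts to a dg Morita equivalence $\mathcal{A}_X^{\tiny\mbox{dg}}\simeq\mathcal{A}_Y^{\tiny\mbox{dg}}$ between the natural enhancements induced from $\mbox{perf}_{\tiny\mbox{dg}}(X)$ and $\mbox{perf}_{\tiny\mbox{dg}}(Y)$. Since $\mbox{V}_{\tiny\mbox{nc}}$ is a dg Morita invariant, $\mbox{V}_{\tiny\mbox{nc}}(\mathcal{A}_X^{\tiny\mbox{dg}})$ is equivalent to $\mbox{V}_{\tiny\mbox{nc}}(\mathcal{A}_Y^{\tiny\mbox{dg}})$; the latter is precisely the cubic fourfold case of Theorem (B), closing the chain.

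The main technical point, and the one I expect to be the principal obstacle, is ensuring that the Kuznetsov--Perry equivalence (a priori formulated at the triangulated level) really does lift to a dg Morita equivalence of the induced enhancements $\mathcal{A}_X^{\tiny\mbox{dg}}$ and $\mathcal{A}_Y^{\tiny\mbox{dg}}$; this is essential because $\mbox{V}_{\tiny\mbox{nc}}$ is invariant only under equivalences of dg categories. Once this enhancement lift is verified, either by inspection of the Fourier--Mukai kernel or by appealing to the uniqueness of dg enhancements for admissible subcategories of smooth projective varieties, the remainder of the argument is a formal combination of the additivity of $\mbox{V}_{\tiny\mbox{nc}}$ under semiorthogonal decompositions with the cubic fourfold part of Theorem (B).
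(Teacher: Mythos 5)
Your proposal is correct and follows essentially the same route as the paper: reduce via the semiorthogonal decomposition to $\mbox{V}_{\tiny\mbox{nc}}(\mathcal{A}_X^{\tiny\mbox{dg}})$, then transfer the problem to the Kuznetsov category of the associated smooth cubic fourfold through the Kuznetsov--Perry Fourier--Mukai equivalence, whose dg lift (the paper cites Lunts--Schn\"urer, Schn\"urer and To\"en for exactly this point) makes Theorem (B) applicable. The technical obstacle you flag, namely lifting the triangulated equivalence to a dg Morita equivalence, is precisely the step the paper handles by those citations, so no genuinely different argument is involved.
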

We point out that the proof of Theorem (C) is based on the fact that the semiorthogonal decomposition of $\mbox{perf}(X)$ contains the category of Kuznetsov associated to a cubic fourfold, as showed in \cite{KuPe}.\\

To the best of the authors knowledge, Theorem ({A}) and Theorem ({C}) prove Voevodsky's nilpotence conjecture in new cases.\\
We believe that Theorem (B) provides a new tool for the proof of Voevodsky's conjecture of a smooth projective $k$-scheme whose derived category of perfect complexes contains the noncommutative K3 surface $\mathcal{A}_X$.\\

The plan of the paper is as follows.\ In Section 1 we briefly survey some constructions and basic properties of pure motives.\ In Section 2.1 and Section 2.2 we recall some definitions and important properties of cubic fourfolds and Gushel- Mukai varieties.\ Section 2.3 is devoted to prove Voevodsky's conjecture for cubic fourfolds and generic Gushel-Mukai fourfolds. Section 3 treats the theory of noncommutative motives and the relation with the theory of pure motives. 
In Section 4 we give a brief review of the notion of Kuznetsov category (resp.\ GM category) associated to a cubic fourfold (resp.\ a Gushel-Mukai variety). Then, we prove the Voevodsky's nilpotence conjecture for the Kuznetsov category and for the GM category of a generic ordinary Gushel-Mukai fourfold.
In Section 5 we show some consequences of the results proved in the previous sections. In particular, we prove the Voevodsky's conjecture for Gushel-Mukai fourfolds containing some particular surfaces.\\ 



\textbf{Acknowledgements:} We thank Paolo Stellari and Alexander Kuznetsov for many useful suggestions and for his comments that made us improve this short article.\\
The first author wants to thank Marcello Bernardara, Gon\c calo Tabuada and Luca Barbieri Viale for many useful and interesting discussions.\ He would also like to thank the Department of Mathematics of MIT for its warm hospitality and excellent working conditions.\ The first author was supported by the "National Group of Algebraic and Geometric Structures and their Applications" (GNSAGA-INdAM).\\
The second author want to thank Gian Pietro Pirola for answering a question concerning Bertini's Theorem.\\

\textbf{Notations and conventions:} 
The letter $k$ will stand for a field.\ The letter $F$ will denote a commutative ring.\ Given a smooth projective $k$-scheme $X$, we will write $\text{CH}_i(X)$ for the Chow group of $i$-dimensional cycles modulo rational equivalence.\ 
Throughout the article we will assume that all cubic fourfolds and Gushel-Mukai varieties are smooth.
We will denote by $\mbox{perf($X$)}$ the derived category of perfect complexes of $\mathcal{O}_X$-modules and by $\mbox{perf}_{\tiny\mbox{dg}}(X)$ the corresponding (unique) dg enhancement. Moreover, if there exists a semiorthogonal decomposition of the form $\mbox{perf}(X)=\langle\mathcal{T}_1,...,\mathcal{T}_l\rangle$, then $\mathcal{T}_i^{\tiny\mbox{dg}}$ is the dg enhancement induced from $\mbox{perf}_{\tiny\mbox{dg}}(X)$. If $\mathscr{C}$ is an essentially small category, then $\mbox{Iso}[\mathscr{C}]$ denotes the set of isomorphism classes of objects of $\mathscr{C}$.

\section{Background in pure motives}

In this first section we give some information about the theory of pure motives. In particular, we define the group of algebraic cycles and some adequate equivalence relations on it. Then, we give an idea of the construction of the category of Chow motives; cf.\ \cite{And} for such a construction. Finally, we recall some basic properties of rational motivic decomposition we will use in the next.\\

Let $k$ be a field. 

\begin{defn}[Group of algebraic cycles]
Let $X$ be a smooth projective $k$-scheme. We define the group of algebraic cycles $\mathcal{Z}^*(X)$ to be the direct sum $\bigoplus_{d\in\mathbb{N}}\mathcal{Z}^d(X)$, where $\mathcal{Z}^d(X)$ denotes the group $$\mathcal{Z}^d(X):=\Bigl\{V=\sum_i n_iV_i,\mbox{ }\mbox{ }\mbox{ }\parbox{20em}{s.t. $n_i\in\mathbb{Z}$ and $V_i$ is an irreducible reduced closed subscheme with $\mbox{codim}_X(V_i)=d$}\mbox{$\Bigr\}$}.$$
\end{defn}

\begin{rem}
If $F$ is a commutative ring, we set $\mathcal{Z}^*(X)_F=\mathcal{Z}^*(X)\otimes F$.
\end{rem} 

For any pair $\alpha$, $\beta\in\mathcal{Z}^*(X)_F$, we can define their intersection product $\alpha\cdot\beta$ by intersection theory.
In order to define a ring structure on the group of algebraic cycles induced by the intersection product, it is necessary to quotient the group by an adequate equivalence relation. We give some examples of adequate relations.

\begin{exmp}(Rational equivalence)
We say that two algebraic cycles $\alpha$ and $\beta$ in $\mathcal{Z}^d(X)_F$ are rationally equivalent ($\alpha\sim_{\tiny\mbox{rat}}\beta$) if there exists an algebraic cycle $\gamma\in\mathcal{Z}^d(X\times\mathbb{P}^1)_F$, flat over $\mathbb{P}^1$, such that $i^{-1}_0\gamma-i^{-1}_{\infty}\gamma=\alpha-\beta$. The maps $i_0:X\times\mathcal{f}0\mathcal{g}\to X\times\mathbb{P}^1$ and $i_\infty:X\times\mathcal{f}\infty\mathcal{g}\to X\times\mathbb{P}^1$ are the respective inclusions. In the case of divisors, the condition above is equivalent to say that there exists a rational function $f$ on $X$ such that $\alpha-\beta=Z(f)$. 
We call \emph{Chow ring} the ring $\mathcal{Z}^*(X)_F/_{\sim_{\tiny\mbox{rat}}}$ and we denote it by Chow($X$).
\end{exmp}

\begin{exmp}[Smash-nilpotence equivalence]\label{exmp:nil}
We say that an algebraic cycle $\alpha\in\mathcal{Z}^*(X)_F$ is smash-nilpotent equivalent
to zero if there exists a positive integer $n$ such that $\alpha^{\otimes n}$ is equal to $0$ in Chow($X$). Two algebraic cycles $\alpha$, $\beta\in\mathcal{Z}^*(X)_F$ are smash-nilpotent equivalent ($\alpha\sim_{\otimes_{\tiny\mbox{nil}}}\beta$) if the algebraic cycle $\alpha-\beta$ is smash-nilpotent equivalent to zero.
\end{exmp}

\begin{exmp}[Numerical equivalence]\label{exmp:num}
We say that an algebraic cycle $\alpha\in\mathcal{Z}^*(X)_F$ is numerically trivial if for all $\gamma\in\mathcal{Z}^{n-d}(X)_F$, $\gamma\cdot\alpha=0\in\mathcal{Z}^n(X)_F$. Two cycles $\alpha$ and $\beta$ are numerically equivalent $(\alpha\sim_{\tiny\mbox{num}}\beta)$ if the algebraic cycle $\alpha-\beta$ is numerically trivial.
\end{exmp}

Roughly speaking we can define the category of Chow motives whose objects are the triples $(X,p,r)$, where $X$ is a smooth projective $k$-scheme, $p$ is an idempotent endomorphism and $r$ is an integer, and whose morphisms are obtained by the algebraic cycles. We denote by $\mbox{Chow($k$)}$ the category of Chow motives. For further details about the construction of $\mbox{Chow($k$)}$ and $\mbox{Chow($k$)}_F$ consult \cite{And}.\\
We have a contravariant symmetric monoidal functor
\begin{align*}
\mathfrak{h}:\mbox{SmProj$(k)$}^{\tiny\mbox{op}} &\to \mbox{Chow($k$)}\\
X&\mapsto \mathfrak{h}(X),
\end{align*}
where $\mbox{SmProj($k$)}$ denotes the category of smooth projective $k$-schemes.\\
It is well known that $\mbox{Chow($k$)}$ is an additive, idempotent complete and rigid symmetric monoidal category. We list some properties of the functor $\mathfrak{h}$.

\subsection{Projective space}\label{subsec:a} Let us denote by $\mathds{1}$ the $\otimes$-unit of the category $\mbox{Chow($k$)}$; we recall that $\mathfrak{h}(\mathbb{P}^1)=\mathds{1}\oplus\mathbb{L}$, where $\mathbb{L}$ denotes the Lefschetz motive.\\
In more general terms, for every positive integer $n$, we have the decomposition $\mathfrak{h}(\mathbb{P}^n)=\bigoplus_{i=0}^n\mathds{1}(-i)$, where $\mathds{1}(1)$ denotes the Tate motive (i.e. the inverse of $\mathbb{L}$, formally $\mathds{1}(-1)=\mathbb{L}$) and $-(i)$ denotes $-\otimes \mathds{1}(1)^{\otimes i}$.\\
\subsection{Blowups}\label{subsec:b} The functor $\mathfrak{h}$ is "well behaved" with respect to blowups. In detail, let $X$ be a smooth projective variety over a field $k$ and let $j:Y\hookrightarrow X$ be a smooth closed subvariety of codimension $r$. Then the blowup $\pi_Y:\mbox{Bl}_Y(X)\to X$ of $X$ in $Y$ induces an isomorphism of Chow motives $\mathfrak{h}(X)\oplus\bigoplus_{i=1}^{r-1}\mathfrak{h}(Y)(i)\to\mathfrak{h}(\mbox{Bl}_{Y}(X))$ (see \cite{Man}). As a consequence, if $\mbox{dim$Y$}\le 2$, then $\mbox{V}(\mbox{Bl}_{Y}(X))$ holds if and only if $\mbox{V}(X)$ holds.\\

\subsection{Flat morphisms}\label{subsec:c} Consider a flat morphism $f:X\to B$ in $\mbox{SmProj$(k)$}$, with $X$ and $B$ of dimension $d_X$ and $d_B$, respectively.\ We denote by $X_b$ the fiber of $f$ over a point $b$ in $B$ and let $\Omega$ be a universal domain containing $k$. Assume that $\mbox{CH}_l(X_b)=\mathbb{Q}$, for all $0\le l<\frac{d_X-d_B}{2}$ and for all points $b\in B(\Omega)$. Then we have a direct sum decomposition of the Chow motive of $X$ as $\mathfrak{h}(X)\simeq \bigoplus_{i=0}^{d_X-d_B}\mathfrak{h}(B)(i)\oplus (Z,r,\lfloor \frac{d_X-d_B+1}{2}\rfloor)$, where $Z$ is a smooth and projective variety of dimension
$$d_Z=
\begin{cases}
d_B-1, \mbox{ if $d_X-d_B$ is odd,}\\
d_B, \mbox{ if $d_X-d_B$ is even.}
\end{cases}$$
For a complete proof of this result, we refer to \cite{Via}, Theorem 4.2, Corollary 4.4. 

\begin{rem}
We point out that the same results hold for the category $\mbox{Chow}(k)_F$ for any field $F$.
\end{rem}

\section{Cubic fourfolds and Gushel-Mukai varieties}

In this section we briefly recall some facts about cubic fourfolds and Gushel-Mukai varieties.\ From now on the field $k$ is the complex field $\mathbb{C}$.\ We are interested in finding a quadric fibration from the blow-up of a cubic fourfold (resp.\ a GM fourfold) over a line (resp.\ a surface) to the projective space $\mathbb{P}^3$.  Then, we show how to use this geometric construction to prove Voevodsky's nilpotence conjecture for cubic fourfolds and ordinary generic GM fourfolds.

\subsection{Cubic fourfolds}
\begin{defn}[Cubic fourfold]
A \emph{cubic fourfold} is a smooth complex hypersurface of degree 3 in $\mathbb{P}^5$.
\end{defn}

We observe that a cubic fourfold $X$ contains (at least) a line $l$.\ Indeed, let $F(X)$ be the Fano variety of lines associated to $X$, which parametrizes the lines of $\p^5$ contained in $X$.\ By \cite{Fan}, \cite{AlKl}, \cite{BaVdV}, we know that $F(X)$ is a smooth and connected projective variety of dimension $4$.\ Actually, by \cite{BeDo}, we have that $F(X)$ is an irreducible holomorphic symplectic fourfold, deformation equivalent to the Hilbert scheme of points of length two on a K3 surface.\ In particular, $X$ contains at least a line, as we claimed.
  
We denote by $l$ a line in $X$ and let $\text{Bl}_l(X)$ be the blow-up of $X$ in $l$. The aim of this paragraph is to prove that the projection from the line $l$ induces a flat and smooth quadric fibration from $\text{Bl}_l(X)$.



\begin{lem}\label{IN}
Let $X$ be a smooth cubic fourfold and let $l$ be a line in $X$.\ Then the linear projection from the line $l$ induces a smooth flat quadric fibration from the blow-up $\emph{Bl}_l(X)$ to $\mathbb{P}^3$.
\end{lem}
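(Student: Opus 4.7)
The strategy is to realize $f$ as the restriction of the standard
$\mathbb{P}^2$-bundle that resolves the linear projection
$\pi_l\colon\mathbb{P}^5\dashrightarrow\mathbb{P}^3$, to identify the fibers as
conics via an explicit coordinate computation, and to upgrade
equidimensionality of fibers to flatness by miracle flatness.

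First, I would identify the blow-up $\mathrm{Bl}_l(\mathbb{P}^5)$ with the
incidence variety $\{(x,b)\in\mathbb{P}^5\times\mathbb{P}^3 : x\in\langle l,b\rangle\}$,
so that the projection
$\tilde\pi_l\colon\mathrm{Bl}_l(\mathbb{P}^5)\to\mathbb{P}^3$ is a
$\mathbb{P}^2$-bundle whose fiber over $b$ is the plane
$P_b=\langle l,b\rangle$. Since $l\cong\mathbb{P}^1$ is smooth and contained
in the smooth variety $X$, the blow-up $\mathrm{Bl}_l(X)$ is smooth and
embeds into $\mathrm{Bl}_l(\mathbb{P}^5)$ as the strict transform of $X$; the
morphism of the lemma is then the restriction
$f=\tilde\pi_l|_{\mathrm{Bl}_l(X)}$.

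Next, I would verify in coordinates that the fibers of $f$ are conics.
Taking $l=\{x_2=\cdots=x_5=0\}$, the condition $l\subset X$ forces the
defining cubic to have the form
$F=\sum_{|\alpha|\ge 1}A_\alpha(x_0,x_1)x^\alpha$. On the blow-up chart
where $x_i=\lambda y_i$ for $i\ge 2$, so that $(x_0{:}x_1{:}\lambda)$ are
the coordinates on the fiber $P_b\cong\mathbb{P}^2$, the cubic factors as
$F=\lambda\cdot G(x_0,x_1,\lambda;y)$, with $G$ homogeneous of degree $2$
in $(x_0{:}x_1{:}\lambda)$ and depending polynomially on
$b=(y_2{:}\cdots{:}y_5)$. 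The factor $\lambda$ cuts out the exceptional
divisor, so the strict transform is $\{G=0\}$, and the fiber of $f$ over
$b$ is exactly the conic $\{G(\,\cdot\,;y)=0\}\subset P_b$.

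Finally, to deduce flatness it suffices to check that $G(\,\cdot\,;y)$
does not vanish identically for any $b\in\mathbb{P}^3$; otherwise the whole
of $P_b$ would lie on $X$. This is the delicate point of the argument:
it amounts to requiring that $l$ not be contained in any $2$-plane of $X$,
a generic condition since a smooth cubic fourfold contains at most finitely
many planes, each meeting $F(X)$ in a proper subvariety. Once this is
arranged, every fiber of $f$ is a $1$-dimensional conic, matching the
expected relative dimension $\dim\mathrm{Bl}_l(X)-\dim\mathbb{P}^3=1$; and
since $\mathrm{Bl}_l(X)$ is Cohen--Macaulay (being smooth) and
$\mathbb{P}^3$ is regular, miracle flatness yields the flatness of $f$ and
hence the desired quadric fibration structure. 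The main obstacle is
controlling the possible planes through $l$; once that is handled,
everything else is formal.
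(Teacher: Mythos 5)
Your construction is the same as the paper's---realize $\mathrm{Bl}_l(X)$ inside the $\mathbb{P}^2$-bundle $\mathrm{Bl}_l(\mathbb{P}^5)\to\mathbb{P}^3$ and restrict the projection---but the paper's proof stops essentially at the setup: it asserts that the restriction is a smooth flat conic fibration, with no computation of the fibers and no flatness argument. Your coordinate computation (the factorization $F=\lambda\cdot G$ with $G$ of degree $2$ in $(x_0{:}x_1{:}\lambda)$, identifying the fiber over $b$ with the conic residual to $l$ in $P_b\cap X$) and the appeal to miracle flatness for the smooth, hence Cohen--Macaulay, total space $\mathrm{Bl}_l(X)$ are exactly the missing details, and they are correct.

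More importantly, the ``delicate point'' you isolate is genuine, and the paper does not address it. Flatness fails precisely when some plane $P_b\supset l$ is contained in $X$, in which case $G(\,\cdot\,;y_b)\equiv 0$ and the fiber over $b$ is two-dimensional. Smooth cubic fourfolds containing planes do exist (they form a divisor in moduli, and the paper itself discusses cubic fourfolds containing a plane in a remark in Section 4), so the lemma as literally stated---for an \emph{arbitrary} line $l\subset X$---is false for a line lying in such a plane; your restriction to lines not contained in any plane of $X$ is necessary, not an artifact of your method. Your genericity argument is also sound: $X$ contains only finitely many planes, each contributing a surface of lines inside the irreducible fourfold $F(X)$, so a general line works. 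The resulting weaker statement is all the paper actually needs, since in the proof of Theorem (A) the line may be chosen general in $F(X)$. In short, your proposal has no gap; it fills in, and in one respect corrects, the paper's own argument.
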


\begin{proof}
Let $V_6$ be a six-dimensional vector space such that $X \subset \p(V_6) \cong \p^5$. Let $V_2$ be a two-dimensional subvector space of $V_6$ such that $l= \p(V_2)\cong \p^1$ and we set $V_4:=V_6/V_2$. We denote by $\text{Bl}_l(\p(V_6))$ the blow-up of $\p(V_6)$ in $l$. Then the projection from the line $l$ defines a regular map $\pi: \text{Bl}_l(\p(V_6)) \rightarrow \p(V_4)\cong \p^3$, which is a $\p^2$-bundle over $\p^3$.  Let $\pi_l: \text{Bl}_l(X)  \rightarrow X$ be the blow-up of $X$ along $l$. Then the restriction of $\pi$ to $\text{Bl}_l(X)$ induces a smooth flat conic fibration $f: \mbox{Bl}_{l}(X) \rightarrow \p(V_4)\cong \p^3$. In other words, we have the following commutative diagram
\begin{equation*}
\label{DC2}
\xymatrix{
\text{Bl}_l(X) \ar[d]_{\pi_l} \ar@{^{(}->}[r] \ar[rd]_f & \text{Bl}_l(\p(V_6)) \ar[d]^{\pi}\\
X \subset \p(V_6)   & \p(V_4) 
} 
\end{equation*}  
where $f$ is the map claimed in the statement. 
 
\end{proof}

\subsection{Gushel-Mukai varieties}
Let $V_5$ be a $k$-vector space of dimension 5; considering the Pl\"ucker embedding, we have that $\mbox{Cone}(\mbox{Gr$(2,V_5)$})\subset\mathbb{P}(k\oplus\wedge^2V_5)$. We denote by $W$ a linear subspace of dimension $n+5$ of $\wedge^2V_5\oplus k$ (with $2\le\mbox{$n$}\le 6$). 

\begin{defn}[Gushel-Mukai $n$-fold]
We define a \emph{Gushel-Mukai $n$-fold} $X$ to be a smooth and transverse intersection of the form $$X=\mbox{Cone}(\mbox{Gr$(2,V_5)$})\cap\mbox{Q},$$
where $\mbox{Q}$ is a quadric hypersurface in $\mathbb{P}(W)$.\\
We say that $X$ is: 
\begin{itemize}
\item \emph{Ordinary} if $X$ is isomorphic to a linear section of $\mbox{Gr}(2,V_5)\subset\mathbb{P}^9$,
\item \emph{Special} if $X$ is isomorphic to a double cover of a linear section of $\mbox{Gr}(2,V_5)$ branched along a quadric section.
\end{itemize}
From now on, we will write GM instead of Gushel-Mukai.
\end{defn}

Let $X$ be a GM variety. Notice that $X$ does not contain the vertex of the cone over $\mbox{Gr}(2,V_5)$, because $X$ is smooth. Thus, we have a regular map defined by the projection from the vertex:
$$\gamma_X:X\to\mbox{Gr(2,$V_5$)}.$$

\begin{defn}[Gushel bundle]
Let $\mathscr{U}$ be the tautological bundle of rank 2 over $\mbox{Gr$(2,V_5)$}$. We define the \emph{Gushel bundle} to be the pullback $\mathscr{U}_X:=\gamma_X^*\mathscr{U}$.
\end{defn}

We denote by $\pi:\p_X(\mathscr{U}_X) \rightarrow X$ the projectivization of the bundle $\mathscr{U}_X$. We can consider the map
$$\rho: \p_X(\mathscr{U}_X) \rightarrow \p(V_5)$$
induced by the embedding $\mathscr{U}_X \hookrightarrow V_5 \otimes \mathcal{O}_X$. By \cite{DeKu}, Proposition 4.5, we have that $\rho$ is a fibration in quadrics.

Now, let us suppose that $X$ is an ordinary GM fourfold. By \cite{DeKu}, Remark 3.15 and Remark B.4, the fibers of $\rho$ are all conics in $\p^2$ except for the fiber over a point $v_0$ in $\p(V_5)$, which is a $2$-dimensional quadric in $\p^3$. Let us fix a four-dimensional subvector space $V_4$ of $V_5$ such that the point $v_0$ is not contained in $\p(V_4)$. We set
$$\tilde{X}:= \p_X(\mathscr{U}_X) \times_{\p(V_5)} \p(V_4)$$
and we denote by $\tilde{\rho}$ the restriction of $\rho$ to $\tilde{X}$. Thus, we have the following commutative diagram
\begin{equation}
\label{DC1}
\xymatrix{
  & \tilde{X} \ar[dl]_{\sigma} \ar[d]_{\tilde{\rho}} \ar[r] & \p_X(\mathscr{U}_X) \ar[d]^{\rho} \ar[dll]^{\pi} \\
X  &\ar[l] \p(V_4)  \ar[r]                        & \ar@/^/[ll]  \p(V_5)} 
\end{equation}                         
By the previous observations, we have that the restriction $\tilde{\rho}$ defines a flat conic fibration over $\p(V_4) \cong \p^3$. In the rest of this section, we prove that $\tilde{\rho}$ is smooth when $X$ is generic.

Notice that for every $x$ in $X$, the fiber of $\sigma$ over $x$ is equal to $\p(\mathscr{U}_{X,x} \cap V_4)$. In particular, we have that $\sigma^{-1}(x)$ is a point (resp.\ a line) if the dimension of $\mathscr{U}_{X,x} \cap V_4$ is equal to $1$ (resp.\ if $\mathscr{U}_{X,x} \subset V_4$). It follows that the locus of non trivial fibers of $\sigma$ is the intersection 
\begin{equation}
\label{E}
E:= \G(2,V_4) \cap X=\G(2,V_4) \cap \p(W) \cap \mbox{Q} \subset \p(\bigwedge^2 V_5) \cong \p^9.
\end{equation} 
Since the Grassmannian $\G(2,V_4)$ has degree $2$, we have that the degree of $E$ is at most $4$. Moreover, the expected dimension of $E$ is $2$. On the other hand, by Lefschetz Theorem the fourfold $X$ cannot contain a divisor with degree less than $10$, because its class has to be cohomologous to the class of a hyperplane in $X$. Thus, we conclude that $\text{dim}(E) \leq 2$. In the next lemma, we show that $E$ is smooth under generality assumptions on $\p(W)$ and $\mbox{Q}$; in this case, $E$ is a del Pezzo surface of degree $4$.

\begin{lem}
\label{lemmanuovo}
If $W$ is a generic vector space of dimension $9$ in $\bigwedge^2V_5$ and $\mbox{Q}$ is a generic quadric hypersurface in the linear system $|\mathcal{O}_{\p(W)}(2)|$, then $E$ defined in \eqref{E} is a smooth and irreducible surface.
\end{lem}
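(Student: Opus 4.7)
The plan is to realize $E$ as a complete intersection of two quadrics in a $\p^4$, to apply Bertini twice to obtain smoothness, and to conclude irreducibility from the connectedness of positive-dimensional complete intersections.

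First, I set $H := \p(W \cap \bigwedge^2 V_4)$. Since $\bigwedge^2 V_4$ has dimension $6$ and $W$ has codimension $1$ in the $10$-dimensional space $\bigwedge^2 V_5$, for generic $W$ the intersection $W \cap \bigwedge^2 V_4$ is $5$-dimensional, so $H$ is simultaneously a hyperplane in $\p(\bigwedge^2 V_4) \cong \p^5$ and a linear $\p^4$ inside $\p(W) \cong \p^8$. As $W$ varies, every hyperplane of $\p(\bigwedge^2 V_4)$ arises as some such $H$ (extend its defining linear form arbitrarily to $\bigwedge^2 V_5$ and take $W$ to be its kernel), so generic $W$ yields a generic hyperplane $H$. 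Applying Bertini to the smooth Pl\"ucker quadric $\G(2,V_4) \subset \p(\bigwedge^2 V_4)$, I conclude that $Q_1 := \G(2,V_4) \cap H$ is a smooth quadric threefold in $H \cong \p^4$.

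Next, restriction of quadratic forms $\mathrm{Sym}^2(W^\vee) \to \mathrm{Sym}^2((W \cap \bigwedge^2 V_4)^\vee)$ is surjective, so as $Q$ varies in $|\mathcal{O}_{\p(W)}(2)|$ the restriction $Q|_H$ realizes every quadric on $H$, and the induced linear system on $H$ is base-point free. Bertini applied to the smooth threefold $Q_1$ then shows that for generic $Q$ the hypersurface $Q|_H$ meets $Q_1$ transversely, so that $E = Q_1 \cap Q|_H$ is smooth. By construction $E$ is a complete intersection of two quadrics in $H \cong \p^4$, hence of dimension $2$ and degree $4$.

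Finally, a smooth complete intersection of positive dimension in projective space is connected (by the Koszul computation $H^0(\mathcal{O}_E) = k$, or equivalently by the Lefschetz hyperplane theorem), and a smooth connected scheme is irreducible; hence $E$ is a smooth irreducible del Pezzo surface of degree $4$. The main technical point is that the two Bertini-type genericity conditions must be imposed simultaneously with the smoothness and transversality conditions already built into the definition of a GM fourfold. This is not an obstruction because each of these is a nonempty Zariski open condition on its respective parameter space, so a finite intersection of them is still nonempty and open, and any pair $(W,Q)$ lying in this common open subset satisfies all requirements at once.
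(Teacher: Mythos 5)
Your proof is correct, and its skeleton is the same as the paper's: Bertini applied twice, first to the hyperplane section of the Pl\"ucker quadric $\G(2,V_4)$, then to the quadric section of the resulting threefold. Indeed your $Q_1$ is literally the paper's $Y = \p(W)\cap \G(2,V_4)$, since $\G(2,V_4)$ lies in $\p(\bigwedge^2 V_4)$. The two arguments diverge in the second half. The paper stays in $\p(W)\cong\p^8$ and turns the quadric $\mbox{Q}$ into a hyperplane via the Veronese embedding $\p^8\hookrightarrow\p^N$, so that one more appeal to Bertini for hyperplane sections yields smoothness and irreducibility of $E$ simultaneously; it also rules out $Y\subset \mbox{Q}$ separately, using the observation made before the lemma (via Lefschetz) that $X$ cannot contain a divisor of degree less than $10$. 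You instead cut everything down to $H=\p(W\cap\bigwedge^2V_4)\cong\p^4$, transfer genericity through the two surjective linear restriction maps (hyperplanes $W$ to hyperplanes of $\p(\bigwedge^2V_4)$, and quadrics on $\p(W)$ to quadrics on $H$) --- a point the paper leaves implicit --- and exhibit $E$ as a complete intersection of two quadrics in $\p^4$; smoothness then comes from Bertini for the base-point-free system of quadrics on $Q_1$, and irreducibility from connectedness of positive-dimensional complete intersections ($H^0(\mathcal{O}_E)=k$ by Koszul). Your route buys an explicit identification of $E$ as a quartic del Pezzo surface (which the paper only asserts after the lemma) and dispenses with both the Veronese trick and the Lefschetz degree digression; the paper's route gets irreducibility directly from Bertini without invoking the complete-intersection structure. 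Both are complete proofs.
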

\begin{proof}
We consider the intersection $Y:=\p(W) \cap \G(2,V_4) \subset \p(\bigwedge^2V_5) \cong \p^9$. By Bertini's Theorem on hyperplane sections (see \cite{GH}), we have that $Y$ is smooth and irreducible, because $\p(W)$ is a generic hyperplane in $\p^9$. 

Let $i:Y \hookrightarrow \p^8$ be the embedding of $Y$ in $\p(W) \cong \p^8$. Notice that if $Y$ is contained in the quadric $\mbox{Q}$, then $Y=E$ would be a smooth divisor in $X$ with degree less than $10$, in contradiction with the previous observation. Hence, we have that the quadric $\mbox{Q}$ does not contain $Y$. Again by Bertini's Theorem, the intersection $Y \cap \mbox{Q}=E$ is smooth and irreducible. Indeed, we can consider the embedding of $\p^8$ in $\p(H^0(\p^8,\mathcal{O}(2))) \cong \p^N$ defined by $\mathcal{O}(2)$. The quadric hypersurfaces in $\p^8$ correspond to hyperplanes in $\p^N$ via this embedding. Thus, by Bertini's Theorem for hyperplane sections, we conclude that the intersection of the image of $Y$ with the generic hyperplane in $\p^N$, corresponding to the generic quadric $\mbox{Q}$, is smooth and irreducible. Hence, we conclude that $E$ is smooth and irreducible of dimension $2$, as we wanted.       
\end{proof}

As a consequence, we obtain the smoothness of the restriction to a hyperplane of the conic fibration $\rho$.   

\begin{cor}
\label{fibrazione}
Let $X$ be an ordinary generic GM fourfold. Then $\tilde{X}$ is the blow-up of $X$ in $E$ and the map $\tilde{\rho}: \tilde{X} \rightarrow \p(V_4)$ defined in \eqref{DC1} is a smooth flat conic fibration.  
\end{cor}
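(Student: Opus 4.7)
The statement has two parts: (i) $\sigma \colon \tilde{X} \to X$ exhibits $\tilde{X}$ as the blow-up of $X$ along $E$; (ii) $\tilde{\rho} \colon \tilde{X} \to \p(V_4)$ is a smooth flat conic fibration. My plan is to realise $\tilde{X}$ as an incidence variety inside $\p_X(\mathscr{U}_X)$: this gives (i) directly via the classical description of blow-ups along zero loci of regular sections of rank-$2$ bundles, and then (ii) will follow from (i) with little effort.

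For (i), I would fix a linear form $\ell \in V_5^*$ with $\ker(\ell)=V_4$ and restrict it along the inclusion $\mathscr{U}_X \hookrightarrow V_5 \otimes \mathcal{O}_X$ to obtain a section $s \in H^0(X,\mathscr{U}_X^*)$. By construction $s(x)=0$ precisely when $\mathscr{U}_{X,x}\subset V_4$, so the scheme-theoretic zero locus $Z(s)$ coincides with $E$. By Lemma \ref{lemmanuovo} the surface $E$ is smooth of codimension $2$ in $X$, hence $s$ is a regular section, and its Koszul complex identifies the conormal bundle $N^*_{E/X}$ with $\mathscr{U}_X|_E$. Unwinding the fibre-product definition, a point $(x,[u])\in \p_X(\mathscr{U}_X)$ lies in $\tilde{X}$ exactly when $u \in V_4$, equivalently when $\langle s(x),u\rangle = 0$, so
\[
\tilde{X} \;=\; \bigl\{\,(x,[u])\in \p_X(\mathscr{U}_X) \;:\; \langle s(x),u\rangle = 0\,\bigr\}.
\]
This is the standard incidence-variety presentation of the blow-up along the vanishing locus of a regular section of a rank-$2$ bundle; it identifies $\tilde{X}$ with $\mbox{Bl}_E(X)$, with $\sigma$ as the blow-down morphism and exceptional divisor $\sigma^{-1}(E) = \p_E(\mathscr{U}_X|_E) = \p_E(N^*_{E/X})$.

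Granting (i), part (ii) is essentially bookkeeping. The blow-up of a smooth fourfold along a smooth surface is smooth, so $\tilde{X}$ is smooth. The fibres of $\tilde{\rho}$ over $\p(V_4)$ coincide with the fibres of $\rho$ over the same points; since $v_0 \notin \p(V_4)$ by the choice of $V_4$, every such fibre is a conic in $\p^2$, and in particular one-dimensional. Flatness then follows from miracle flatness (smooth source, regular target, equidimensional fibres of the expected dimension $\dim \tilde{X} - \dim \p(V_4) = 1$). The main obstacle in the whole argument is the blow-up identification in (i); once the regularity of $s$ is secured by Lemma \ref{lemmanuovo}, everything else reduces to formal considerations.
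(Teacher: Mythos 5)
Your identification of $\tilde{X}$ with the blow-up $\mbox{Bl}_E(X)$ is correct, and in fact more complete than the paper's own argument: the paper only observes that $\sigma^{-1}(E)=\p_E(\mathscr{U}_X|_E)$ agrees with the projectivized conormal bundle $\p_E(\mathcal{N}_{E|X}^*)$, using that $E$ is the zero locus of a regular section of $\mathscr{U}_X^*$, whereas your incidence-variety presentation of $\tilde{X}$ inside $\p_X(\mathscr{U}_X)$ pins down the isomorphism with the blow-up directly. Likewise, your appeal to miracle flatness makes explicit a flatness claim that the paper treats as an observation.

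There is, however, a genuine gap at the point where you invoke Lemma \ref{lemmanuovo}. That lemma assumes that $W$ is a \emph{generic} subspace of $\bigwedge^2 V_5$ and that $\mbox{Q}$ is a \emph{generic} quadric in the linear system $|\mathcal{O}_{\p(W)}(2)|$; the corollary instead assumes that the \emph{variety} $X$ is generic. These are not interchangeable without an argument: a priori, genericity of $X$ (in the family of GM fourfolds) need not translate into genericity of the linear-algebraic data $(W,\mbox{Q})$ cutting it out, since different data can produce isomorphic varieties. The paper devotes the entire first paragraph of its proof to exactly this translation: by \cite{DeKu}, Lemma 2.7 and Theorem 2.9, the groupoid of polarized GM varieties is equivalent to the groupoid of GM data, so a generic $X$ corresponds to a generic GM data set $(W,V_6,V_5,L,\mu,\textbf{q},\varepsilon)$; in particular $W$ is generic in $\bigwedge^2 V_5$, and $\mbox{Q}$ is generic in $|\mathcal{O}_{\p(W)}(2)|$ because $X$ is a generic quadric section of the hull $\p(W)\cap\G(2,V_5)$. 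Without this bridge, the smoothness and irreducibility of $E$ --- on which both the regularity of your section $s$ and the smoothness of $\tilde{X}$ rest --- is not justified. Supplying this step would make your argument complete.
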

\begin{proof}
We observe that the quadric $\mbox{Q}$ which defines $X$ is generic in the linear system $|\mathcal{O}_{\p(W)}(2)|$, because $X$ is a generic quadric section of the hull $\p(W) \cap \G(2,V_5)$. On the other hand, we recall that, by \cite{DeKu}, Lemma 2.7, there exists a functor between the groupoid of polarized GM varieties to the groupoid of GM data, which is an equivalence by \cite{DeKu}, Theorem 2.9. In particular, a generic $X$ corresponds to a generic GM data $(W, V_6,V_5,L,\mu,\textbf{q},\varepsilon)$. Thus, the vector spaces and the linear maps which define this GM data are generic and, then, $W$ is a generic subvector space in $\bigwedge^2 V_5$. By Lemma \ref{lemmanuovo}, we have that the locus $E$ defined by \eqref{E} is smooth and irreducible. 

Notice that $\sigma^{-1}(E)$ is by definition the projective bundle $\p_E(\mathcal{U}_X) \rightarrow E$. On the other hand, the exceptional divisor of the blow-up of $X$ in $E$ is isomorphic to the projectivized conormal bundle $\p_E(\mathcal{N}_{E|X}^*)$. Since $E$ can be represented as the zero locus of a regular section of $\mathscr{U}_X^*$, the conormal bundle of $E$ in $X$ is isomorphic to $\mathscr{U}_X$. Hence, we deduce that $\tilde{X}$ is the blow-up of $X$ in $E$. It follows that $\tilde{X}$ is smooth and $\tilde{\rho}: \tilde{X} \rightarrow \p(V_4)$ is a smooth flat conic fibration, as we claimed. 
\end{proof} 

\subsection{Proof of Voevodsky's nilpotence conjecture for cubic fourfolds and generic GM fourfolds}

In \cite{Voe} Voevodsky conjectured the following statement for the algebraic cycles: 

\begin{conj}[V]
Let $X$ be a smooth projective $k$-scheme; let $\mathcal{Z}^*_{\otimes_{\mbox{\tiny nil}}}(X)_F$ and $\mathcal{Z}^*_{\otimes_{\mbox{\tiny num}}}(X)_F$ be the ring of algebraic  cycles modulo the relation of Example \ref{exmp:nil} and of Example $\ref{exmp:num}$, respectively. Then $\mathcal{Z}^*_{\otimes_{\mbox{\tiny nil}}}(X)_F$ coincides with $\mathcal{Z}^*_{\otimes_{\mbox{\tiny num}}}(X)_F$.
\end{conj}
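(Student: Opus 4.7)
The plan is to separate the two inclusions. For the formal direction $\sim_{\otimes_{\tiny\mbox{nil}}}\Rightarrow\sim_{\tiny\mbox{num}}$, I would argue as follows: if $\alpha^{\otimes n}=0$ in $\mbox{Chow}(X^n)_F$, then for any complementary-codimension cycle $\gamma\in\mathcal{Z}^{n-d}(X)_F$ one has $(\deg(\alpha\cdot\gamma))^n = \deg(\alpha^{\otimes n}\cdot\gamma^{\otimes n})=0$, hence $\deg(\alpha\cdot\gamma)=0$ and $\alpha$ is numerically trivial. This yields a canonical surjection $\mathcal{Z}^*_{\otimes_{\tiny\mbox{nil}}}(X)_F\twoheadrightarrow\mathcal{Z}^*_{\otimes_{\tiny\mbox{num}}}(X)_F$, and the content of conjecture $V(X)$ is that this surjection is also injective, i.e.\ that every numerically trivial cycle on an arbitrary smooth projective $X$ is smash-nilpotent.

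For the substantive direction $\sim_{\tiny\mbox{num}}\Rightarrow\sim_{\otimes_{\tiny\mbox{nil}}}$, my strategy is not a direct assault but a reduction, via motivic decomposition, to varieties for which the statement is known. Concretely, I would bundle together the tools recalled in Section 1: the projective bundle formula of \S\ref{subsec:a}, the blow-up formula of \S\ref{subsec:b} (which propagates $V$ across blow-ups in smooth centers of dimension at most two), and Vial's flat morphism decomposition of \S\ref{subsec:c}, which splits $\mathfrak{h}(X)\simeq\bigoplus_{i=0}^{d_X-d_B}\mathfrak{h}(B)(i)\oplus(Z,p,s)$ and reduces the question on $X$ to the base $B$ and the residual summand $(Z,p,s)$, of strictly smaller dimension. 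The base cases I would invoke are the classical truth of the conjecture for curves and for surfaces, together with its known instances for abelian threefolds, uniruled threefolds, quadric fibrations, intersections of quadrics, linear sections of Grassmannians and of determinantal varieties, and homological projective duals, all listed in the Introduction; the strategy is to iterate the three reduction steps until every summand of $\mathfrak{h}(X)$ lies in this list.

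The main obstacle is that for an arbitrary $X$ no such reduction need terminate in known territory: the residual piece $(Z,p,s)$ may have no tractable description, and indeed the general conjecture remains open. To have any hope of progress on such cases I would pass to the noncommutative side via the BMT reformulation, whereby $V(X)$ is equivalent to $V_{\tiny\mbox{nc}}(\mbox{perf}_{\tiny\mbox{dg}}(X))$, and exploit semiorthogonal decompositions $\mbox{perf}(X)=\langle\mathcal{T}_1,\ldots,\mathcal{T}_l\rangle$ to reduce $V_{\tiny\mbox{nc}}$ to its components $\mathcal{T}_i^{\tiny\mbox{dg}}$, the exceptional summands contributing trivially. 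The real difficulty then concentrates in the residual Kuznetsov-type component, and verifying $V_{\tiny\mbox{nc}}$ for an abstract smooth proper dg category is itself open in general; this is why a case-by-case recognition of the residual dg component as a concrete object, such as a noncommutative K3 surface, appears unavoidable, and this is the route the rest of the paper will take.
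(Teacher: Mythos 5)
The statement you were given is Conjecture (V) itself: the paper never proves it in general and could not, since it is open, so there is no full ``paper proof'' for your attempt to be measured against. You were right not to manufacture one, and your assessment of where the content lies is accurate: the direction $\sim_{\otimes_{\mbox{\tiny nil}}}\Rightarrow\sim_{\mbox{\tiny num}}$ is indeed the classical unconditional half, and your degree argument for it is correct --- $\deg$ is multiplicative under exterior products, so $\bigl(\deg(\alpha\cdot\gamma)\bigr)^n=\deg(\alpha^{\otimes n}\cdot\gamma^{\otimes n})=0$ forces $\deg(\alpha\cdot\gamma)=0$ --- with the one small caveat that this last implication needs $F$ to have no nonzero nilpotents (e.g.\ $F$ a field, which is the setting the paper actually works in; note its remark that the Section 1 results hold in $\mbox{Chow}(k)_F$ ``for any field $F$'').

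Your conditional strategy for the hard direction is, to the extent the paper proves anything, exactly the paper's route for its special cases. Theorem (A) is proved precisely by the reduction you describe: blow up the cubic fourfold in a line (resp.\ the ordinary generic GM fourfold in the degree-$4$ surface $E$ of Corollary 2.2), apply Vial's flat-morphism decomposition from \S1.3 to the resulting smooth flat quadric fibration over $\mathbb{P}^3$ so that the residual summand $(Z,r,1)$ has $\dim Z=2$ --- landing in the known surface case --- and then descend along the blow-up via \S1.2. Theorems (B), (C) and (D) follow your noncommutative template: BMT equivalence, semiorthogonal decomposition with exceptional summands $U(\mathbb{C})$ contributing trivially, and case-by-case identification of the residual component (the Kuznetsov or GM category, a noncommutative K3 surface, matched across the equivalence $\mathcal{A}_X\simeq\mathcal{A}_{X'}$ of Fourier--Mukai type, or $\mathcal{A}_X\simeq\mathcal{D}^b(Y)$ for an honest K3 surface $Y$). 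So your proposal should be read not as a gap-ridden proof but as a correct diagnosis: the surjection $\mathcal{Z}^*_{\otimes_{\mbox{\tiny nil}}}(X)_F\twoheadrightarrow\mathcal{Z}^*_{\otimes_{\mbox{\tiny num}}}(X)_F$ always exists, its injectivity is the conjecture, and the paper establishes it only for the classes of fourfolds where your reduction scheme terminates in known territory.
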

\begin{soa}
Conjecture V was proven for curves, surfaces, abelian threefolds, uniruled threefolds, quadric fibrations, intersection of quadrics, linear sections of Grassmannians, linear sections of determinantal varieties, homological projective duals\ 
(see \cite{And}, \cite{KaSe}, \cite{Mat}, \cite{Voe}, \cite{Voi}, \cite{BMT}).
\end{soa}

\begin{A}[A]\label{A}
Let $X$ be a cubic fourfold or an ordinary generic $GM$ fourfold. Then the conjecture $V(X)$ holds.
\end{A}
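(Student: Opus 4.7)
The plan is to reduce conjecture $V(X)$ in both cases to Voevodsky's conjecture for surfaces (and the trivial case of $\mathbb{P}^3$) by exhibiting $X$ as a blow-up of a variety whose Chow motive splits, via the flat morphism decomposition of Subsection \ref{subsec:c}, into motives of $\mathbb{P}^3$ and the motive of an auxiliary surface.

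First I would treat the cubic fourfold case. Pick a line $l \subset X$ (which exists by the non-emptiness of the Fano variety $F(X)$) and apply Lemma \ref{IN} to obtain a smooth flat conic fibration $f:\text{Bl}_l(X) \to \p(V_4) \cong \p^3$. Here $d_X-d_B = 4-3 = 1$, so the hypothesis of Subsection \ref{subsec:c} reduces to $\text{CH}_0$ of the fibers being $\mathbb{Q}$; since $f$ is smooth, every fiber is isomorphic to $\p^1$, so this is immediate. Subsection \ref{subsec:c} then yields
\[
\mathfrak{h}(\text{Bl}_l(X)) \;\simeq\; \mathfrak{h}(\p^3) \;\oplus\; \mathfrak{h}(\p^3)(1) \;\oplus\; (Z,r,1),
\]
with $Z$ a smooth projective variety of dimension $d_B-1 = 2$ (since $d_X-d_B$ is odd). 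Voevodsky's conjecture holds for $\p^3$ (Subsection \ref{subsec:a}, where the motive is a sum of Tate twists) and for the surface $Z$ (known case). Since $V$ is preserved under direct sums and Tate twists of motives whose summands satisfy $V$, it holds for $\text{Bl}_l(X)$. Finally, as $\dim l = 1 \le 2$, the blow-up formula recalled in Subsection \ref{subsec:b} gives the equivalence $V(\text{Bl}_l(X)) \Leftrightarrow V(X)$, so $V(X)$ holds.

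For an ordinary generic GM fourfold $X$ the argument is formally identical but relies on Corollary \ref{fibrazione} instead of Lemma \ref{IN}: the variety $\tilde{X}$ is the blow-up of $X$ along the smooth del Pezzo surface $E$ of degree $4$, and $\tilde{\rho}: \tilde{X} \to \p(V_4) \cong \p^3$ is a smooth flat conic fibration. The same computation with $d_X-d_B=1$ and smooth conic fibers gives
\[
\mathfrak{h}(\tilde{X}) \;\simeq\; \mathfrak{h}(\p^3) \;\oplus\; \mathfrak{h}(\p^3)(1) \;\oplus\; (Z',r',1)
\]
for some smooth projective surface $Z'$, and conjecture $V$ for $\tilde{X}$ follows from the surface and projective space cases. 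Since $\dim E = 2 \le 2$, Subsection \ref{subsec:b} again yields $V(\tilde{X}) \Leftrightarrow V(X)$, completing the argument.

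The only real subtlety I anticipate is making sure the hypotheses of the flat morphism decomposition of Subsection \ref{subsec:c} are genuinely satisfied — specifically that the conic fibration is smooth, so that the fibers over any point of $B(\Omega)$ are isomorphic to $\p^1_\Omega$ and therefore have rational $\text{CH}_0$. For cubic fourfolds this is guaranteed by Lemma \ref{IN}; for GM fourfolds it is exactly the content of Corollary \ref{fibrazione}, which in turn uses the genericity assumption and Lemma \ref{lemmanuovo} to ensure smoothness of $E$. Beyond this, the proof is a direct assembly: reduce via the blow-up formula, split by the flat morphism decomposition, and invoke the conjecture for surfaces and projective spaces.
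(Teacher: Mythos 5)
Your proposal is correct and follows essentially the same route as the paper: Lemma \ref{IN} (resp.\ Corollary \ref{fibrazione}) to obtain the smooth flat conic fibration, the flat-morphism decomposition of Subsection \ref{subsec:c} to split $\mathfrak{h}(\mbox{Bl}_l(X))$ (resp.\ $\mathfrak{h}(\tilde{X})$) into Tate twists of $\mathfrak{h}(\mathbb{P}^3)$ plus a surface motive, and the blow-up compatibility of Subsection \ref{subsec:b} to descend conjecture $V$ to $X$. Your explicit verification of the hypothesis $\mbox{CH}_0(X_b)=\mathbb{Q}$ for the fibers is a welcome detail that the paper leaves implicit, but it does not change the argument.
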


\begin{proof}
Let $X$ be a cubic fourfold and we consider the blow-up of $X$ along a line $l$. By Subsection \ref{subsec:c} and Lemma \ref{IN}, the Chow motive of the blow-up decomposes as 
$$\mathfrak{h}(\mbox{Bl}_l(X))\simeq\bigoplus_{k=0}^{1}\mathfrak{h}(\mathbb{P}^3)(k)\oplus(Z,r,1)\simeq\bigoplus_{k=0}^{1}(\bigoplus^{3}_{i=0}\mathds{1}(-i))(k)\oplus(Z,r,1),$$ where $r\in\mbox{End$(\mathfrak{h}(Z))$}$ and $\mbox{dim $Z$}=\mbox{dim $\mathbb{P}^3-1$}=2$. It means that conjecture V holds for $\mbox{Bl}_l(X)$: by Subsection \ref{subsec:b} we conclude that conjecture V holds for $X$, as we wanted.

If $X$ is an ordinary generic GM fourfold, the same strategy applied to the conic fibration of Corollary \ref{fibrazione} gives the required statement. 
\end{proof}


\section{Pure motives vs noncommutative motives}

The aim of this section is to give a comparison between the theory of pure motives and the theory of noncommutative motives introduced in \cite{Tab}.
This section is divided in five parts: in the first and in the second we recall some basic properties of dg categories and dg enhancements. Then we give the notion of noncommutative Chow motives and we state Voevodsky's nilpotence conjecture in the noncommutative case. Finally, we relate such a conjecture with the classical Voevodsky's nilpotence conjecture for pure motives.

\subsection{Dg categories}

Let $\mathcal{C}(k)$ be the category of differential graded $k$-modules.
We define a differential graded category (shortly a dg-category) to be a category enriched over $\mathcal{C}(k)$ (morphism sets are complexes) whose compositions fulfill the Leibniz rule: 
$$d(f\circ g)=d(f)\circ g + (-1)^{\tiny\mbox{deg($f$)}}f\circ d(g).$$
A dg functor is a functor between dg categories enriched over $\mathcal{C}(k)$. We denote by dgcat the category whose objects are small dg categories and whose morphisms are dg functors. Consult \cite{Kel} for a precise reference.\\
Let $\mathcal{A}$ be a dg category. We define the opposite category $\mathcal{A}^{\tiny\mbox{op}}$ to be the dg category with the same objects of $\mathcal{A}$ and whose morphisms are given by $\mathcal{A}^{\tiny\mbox{op}}(x, y):=\mathcal{A}(y, x)$. A right dg $\mathcal{A}$-module is a dg functor $M:\mathcal{A}^{\tiny\mbox{op}}\to\mathcal{C}^{\tiny\mbox{dg}}(k)$, where $\mathcal{C}^{\tiny\mbox{dg}}(k)$ is the dg category of dg $k$-modules. We denote by $\mbox{Mod-}\mathcal{A}$ the category of dg $\mathcal{A}$-modules. The derived category of $\mathcal{A}$, denoted by $\mathcal{D}(\mathcal{A})$, is the localization of $\mbox{Mod-}\mathcal{A}$ with respect to the class of objectwise quasi-isomorphisms.\\
A dg functor $F:\mathcal{A}\to\mathcal{B}$ is a Morita equivalence if the induced functor $\mathbb{L}F_{!}:\mathcal{D}(\mathcal{A})\to\mathcal{D}(\mathcal{B})$ on derived categories is an equivalence of triangulated categories.\\
We note that the tensor product of $k$-algebras gives rise to a symmetric monoidal structure $\mbox{-}\otimes\mbox{-}$ on dgcat. The $\otimes$-unit is the dg category with one object $k$.\\ 
Moreover we say that a dg category $\mathcal{A}$ is smooth if it is perfect as a bimodule over itself. We say that $\mathcal{A}$ is proper if for every couple of objects $x, y\in\mathcal{A}$ the complex of $k$-modules $\mathcal{A}(x, y)$ is perfect. The definitions of smooth and proper dg category are due to Kontsevich.

\subsection{Dg enhancements}

Let $X$ be a smooth projective $k$-scheme. We know that the category of perfect complexes $\mbox{perf}(X)$ has a unique dg enhancement $\mbox{perf}_{\tiny\mbox{dg}}(X)$ (cf. \cite{LuOr} or \cite{CaSt}), which is smooth and proper as a dg category.\\
Moreover, suppose that the derived category of perfect complexes on $X$ has a semiorthogonal decomposition of the form $\mbox{perf}(X)=\langle\mathcal{A}_1,...,\mathcal{A}_{n}\rangle$. Then, by \cite{BMT}, we have that every dg category $\mathcal{A}^{\tiny\mbox{dg}}_i$ is smooth and proper (where $\mathcal{A}^{\tiny\mbox{dg}}_i$ denotes the dg enhancement of the subcategory $\mathcal{A}_i$ induced from $\mbox{perf}_{\tiny\mbox{dg}}(X)$).

\subsection{Noncommutative Chow motives}
We briefly recall the construction of noncommutative Chow motives; for the complete explanation see \cite{Tab}. First of all, we denote by Hmo the localization of dgcat with respect to the class of Morita equivalences. We observe that the tensor product of dg categories gives rise to a symmetric monoidal structure on Hmo($k$).\\ 
We fix two dg categories $\mathcal{A}$ and $\mathcal{B}$. We denote by $\mbox{rep($\mathcal{A},\mathcal{B}$)}$ the full triangulated subcategory of $\mathcal{D}(\mathcal{A}^{\tiny\mbox{op}}\otimes^{\mathbb{L}}{\mathcal{B}})$ spanned by the bimodules $M:(\mathcal{A}^{\tiny\mbox{op}}\otimes^{\mathbb{L}}{\mathcal{B}})^{\tiny\mbox{op}}\to\mathcal{C}^{\tiny\mbox{dg}}(k)$ such that $M(\mbox{-},x):\mathcal{B}^{\tiny\mbox{op}}\to\mathcal{C}^{\tiny\mbox{dg}}(k)$ is compact in $\mathcal{D}(\mathcal{B})$, for every $x\in\mathcal{A}$.\\
We have the following bijection
\begin{align*}\tag{$\dagger$}
\mbox{Iso[rep($\mathcal{A},\mathcal{B}$)]}&\to \mbox{Hom}_{\tiny\mbox{Hmo}}(\mathcal{A},\mathcal{B})\\
M&\mapsto \mbox{-}\otimes^{\mathbb{L}}_{\mathcal{A}} M,
\end{align*}
where Iso denotes the set of isomorphism classes of objects in $\mbox{rep($\mathcal{A},\mathcal{B}$)}$. Moreover, using the bijection ($\dagger$), we have an induced composition law in Hmo($k$).\\
Now we define the category $\mbox{Hmo($k$)}_0$ to be the category with the same objects as $\mbox{Hmo($k$)}$ and whose morphisms are given by $$\mbox{Hom}_{\tiny\mbox{Hmo($k$)}_0}(\mathcal{A},\mathcal{B}):=K_0(\mbox{rep}(\mathcal{A},\mathcal{B})).$$
Here $K_0(\mbox{rep($\mathcal{A},\mathcal{B}$)})$ denotes the Grothendieck group of the triangulated sub category $\mbox{rep($\mathcal{A},\mathcal{B}$)}$. As before, the composition law in $\mbox{Hmo($k$)}_0$ is the one induced by the bijection ($\dagger$).\\ 
We recall that the derived tensor product on $\mbox{Hmo($k$)}$ gives rise to a symmetric monoidal structure on $\mbox{Hmo($k$)}_0$.\\ 
We have a sequence of symmetric monoidal functors:
$$U:\mbox{dgcat}\to\mbox{Hmo($k$)}\to\mbox{Hmo($k$)}_0.$$
Finally, we denote by $\mbox{Hmo($k$)}_0^{\tiny\mbox{sp}}$ the full subcategory of the smooth and proper dg categories in $\mbox{Hmo($k$)}$.
We recall that if $\mathcal{A}$ is a smooth and proper dg category, then we have an equivalence of triangulated categories
\begin{align*}\tag{$\ddagger$}
\mbox{rep($\mathcal{A},\mathcal{B}$)}&\simeq\mathcal{D}_{\mbox{c}}(\mathcal{A}^{\tiny\mbox{op}}\otimes^{\mathbb{L}}{\mathcal{B}}),
\end{align*}
where $\mathcal{D}_{\mbox{c}}(\mathcal{A}^{\tiny\mbox{op}}\otimes^{\mathbb{L}}{\mathcal{B}})$ denotes the subcategory of compact objects in $\mathcal{D}(\mathcal{A}^{\tiny\mbox{op}}\otimes^{\mathbb{L}}{\mathcal{B}})$.\\
Using the equivalence ($\ddagger$), we can describe the morphisms of $\mbox{Hmo($k$)}_0^{\tiny\mbox{sp}}$ as
$$\mbox{Hom}_{\tiny\mbox{Hmo($k$)}_0^{\tiny\mbox{sp}}}(\mathcal{A},\mathcal{B})\simeq K_0(\mathcal{D}_{\mbox{c}}(\mathcal{A}^{\tiny\mbox{op}}\otimes^{\mathbb{L}}{\mathcal{B}}))\simeq K_0(\mathcal{A}^{\tiny\mbox{op}}\otimes^{\mathbb{L}}{\mathcal{B}}).$$
Now we are ready to define the rigid symmetric monoidal category of noncommutative Chow motives.

\begin{defn}
We define the category of \emph{noncommutative Chow motives} to be the pseudoabelian envelope of $\mbox{Hom}_{\tiny\mbox{Hmo($k$)}_0^{\tiny\mbox{sp}}}$. We denote such a category by NChow($k$).
\end{defn}

\begin{rem}
We note that the functor $U$ extends naturally to NChow($k$).
\end{rem}

\begin{rem}
Let $X$ be an object in $\mbox{SmProj($k$)}$. If $\mbox{perf}(X)=\langle\mathcal{T}_1,...,\mathcal{T}_l\rangle$ then $U(\mbox{perf}_{\tiny\mbox{dg}}(X))=\mathcal{T}_1^{\tiny\mbox{dg}}\oplus...\oplus\mathcal{T}_l^{\tiny\mbox{dg}}$, where $\mathcal{T}_i^{\tiny\mbox{dg}}$ is the dg enhancement induced from $\mbox{perf}_{\tiny\mbox{dg}}(X)$. A proof of this result is in \cite{BeTa}.
\end{rem}

\begin{rem}
Given a commutative ring $F$, we can define the category $\mbox{NChow($k$)}_F$ taking the $F$-linearization $K_0(\mathcal{A}^{\tiny\mbox{op}}\otimes^{\mathbb{L}}{\mathcal{B}})_F$.
\end{rem}

\subsection{Voevodsky conjecture in the noncommutative case}
Let $\mathcal{A}$ be a smooth and proper dg category. We denote by $K_0(\mathcal{A})$ the Grothendieck group $K_0(\mathcal{D}_{\mbox{c}}(\mathcal{A}))$. In analogy to algebraic cycles we can define some equivalence relations on $K_0(\mathcal{A})$. We give two examples.

\begin{exmp}[$\otimes$-nilpotence equivalence relation]
We say that an element [$M$] in $K_0(\mathcal{A})$ is $\otimes$-nilpotent if there exists a positive integer $n$ such that $[M\times n]=0$ in the Grothendieck group $K_0(\mathcal{A}^{\otimes n})$. Given [$M$] and [$N$] in $K_0(\mathcal{A})$ we say that [$M$] and [$N$] are $\otimes$-nilpotent equivalent (shortly $\mbox{[$M$]$\sim_{\otimes_{\tiny\mbox{nil}}}$[$N$]}$) if $\mbox{[$M$]-[$N$]}$ is $\otimes$-nilpotent.
\end{exmp}
We have a bilinear form $\chi(-,-)$ on $K_0(\mathcal{A})$ defined as $$(M, N)\to\sum_i(-1)^i\mbox{dim Hom}_{\mathcal{D}_{\mbox{c}}(\mathcal{A})}(M, N[i]).$$
The left and right kernels of $\chi(-,-)$ are the same.

\begin{exmp}[Numerical equivalence relation]
We say that an element [$M$] in $K_0(\mathcal{A})$ is numerically trivial if $\chi([M], [N])=0$ for all $[N]\in K_0(\mathcal{A})$. We say that [$M$] and [$N$] are numerically trivial equivalent (shortly $\mbox{[$M$]$\sim_{\otimes_{\tiny\mbox{num}}}$[$N$]}$) if $\mbox{[$M$]-[$N$]}$ is numerically trivial.
\end{exmp}

\begin{rem}
The equivalence relations defined above give rise to well defined equivalence relations on $K_0(\mathcal{A})_F$.
\end{rem}
In \cite{BMT} Bernardara, Marcolli and Tabuada conjectured the following statement:

\begin{conj}[$\mbox{V}_{\tiny\mbox{nc}}$]
Let $\mathcal{A}$ be a smooth proper dg category. Then $\mbox{K}_0(\mathcal{A})/_{\sim\otimes_{\tiny\mbox{nil}}}$ is equal to $\mbox{K}_0(\mathcal{A})/_{\sim\otimes_{\tiny\mbox{num}}}$.
\end{conj}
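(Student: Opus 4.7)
The plan is to reduce the statement for a general smooth proper dg category $\mathcal{A}$ to the classical Voevodsky conjecture via the formal structure of $\mbox{NChow}(k)_F$ and the Bernardara--Marcolli--Tabuada (BMT) theorem recalled in the introduction. First I would reformulate $V_{\tiny\mbox{nc}}(\mathcal{A})$ inside $\mbox{NChow}(k)_F$: using the identification $K_0(\mathcal{A})_F \simeq \mbox{Hom}_{\tiny\mbox{NChow}(k)_F}(U(k),U(\mathcal{A}))$ coming from the equivalence ($\ddagger$), the two relations on $K_0(\mathcal{A})_F$ translate into two $\otimes$-ideals $\mathcal{N}_{\tiny\mbox{nil}}$ and $\mathcal{N}_{\tiny\mbox{num}}$ on the whole rigid symmetric monoidal category $\mbox{NChow}(k)_F$. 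A first formal check, to be performed at the outset, is that both ideals are closed under direct summands, duals, and tensor products; with this, $V_{\tiny\mbox{nc}}(\mathcal{A})$ becomes the equality of these two ideals on all Hom sets in the full rigid subcategory generated by $U(\mathcal{A})$.

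Second, I would exploit geometric models. Assume $\mathcal{A}$ is Morita equivalent to a semiorthogonal summand of $\mbox{perf}_{\tiny\mbox{dg}}(X)$ for some smooth projective $k$-scheme $X$. By the remark of Section $4$, $U(\mathcal{A})$ is then a direct summand of $U(\mbox{perf}_{\tiny\mbox{dg}}(X))$ in $\mbox{NChow}(k)_F$. Stability of $\mathcal{N}_{\tiny\mbox{nil}}$ and $\mathcal{N}_{\tiny\mbox{num}}$ under passage to summands would imply that $V_{\tiny\mbox{nc}}(\mbox{perf}_{\tiny\mbox{dg}}(X))$ forces $V_{\tiny\mbox{nc}}(\mathcal{A})$; by BMT the former is equivalent to $V(X)$. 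The strategy therefore reduces $V_{\tiny\mbox{nc}}(\mathcal{A})$ to locating, for each $\mathcal{A}$, a smooth projective $X$ which simultaneously realises $\mathcal{A}$ as a semiorthogonal component of $\mbox{perf}_{\tiny\mbox{dg}}(X)$ and for which $V(X)$ is accessible via the techniques of Section $2$ (flat morphisms with low-dimensional fibres, blow-ups, quadric fibrations, and the decomposition of Subsection \ref{subsec:c}).

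The main obstacle lies in the combination of these two inputs. On the commutative side, $V(X)$ is itself open for general smooth projective $X$; it is known only for the list of cases recorded in the introduction, so an unconditional reduction to the classical conjecture is out of reach with the current state of the art. On the noncommutative side, it is not known that every smooth proper dg category over $k$ is Morita equivalent to a summand of $\mbox{perf}_{\tiny\mbox{dg}}(X)$ for some smooth projective $X$ whose Chow motive is sufficiently well understood; this is an Orlov-type noncommutative realisation problem, itself widely open. Consequently, I do not expect an unconditional proof of $V_{\tiny\mbox{nc}}$ for an arbitrary $\mathcal{A}$ by the route above: the realistic output of the plan is a conditional theorem of the form \emph{``if $V(X)$ holds for every smooth projective $k$-scheme $X$, then $V_{\tiny\mbox{nc}}(\mathcal{A})$ holds for every smooth proper dg category admitting a geometric realisation''}, together with unconditional instances in every case where both inputs are simultaneously available --- as illustrated for the noncommutative K3 categories $\mathcal{A}_X^{\tiny\mbox{dg}}$ attached to cubic and GM fourfolds in the body of the paper.
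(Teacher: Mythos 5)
The statement you were asked to prove is a conjecture of Bernardara--Marcolli--Tabuada, and the paper contains no proof of it in this generality: it only establishes particular instances (Theorems (B), (C) and (D)) for the noncommutative K3 categories attached to cubic and Gushel--Mukai fourfolds. Your proposal correctly recognises this, and the route you sketch for the accessible cases --- realise $\mathcal{A}$ as a semiorthogonal summand of $\mbox{perf}_{\tiny\mbox{dg}}(X)$, so that $U(\mathcal{A})$ splits off $U(\mbox{perf}_{\tiny\mbox{dg}}(X))$ in $\mbox{NChow}(k)$, use stability of the two equivalence relations under direct summands, and invoke the BMT theorem to reduce to $V(X)$, proved via Vial's flat-morphism decomposition --- is exactly the mechanism the paper uses in those instances.
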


\subsection{Pure motives vs noncommutative motives}
We recall that, given a smooth projective $k$-scheme, we have a relation between the category of Chow motives and the category of noncommutative Chow motives. In particular, we have the following commutative diagram
\[
\xymatrix{
\mbox{SmProj$(k)$}^{\tiny\mbox{op}}\ar[d]_{\mathfrak{h}}\ar[rr]^{\mbox{perf}_{\tiny\mbox{dg}}}&&\mbox{dgcat$(k)$}\ar[d]^U\\
\mbox{Chow$(k)$}\ar[rr]_{\Phi\cdot\eta}&&\mbox{NChow}(k)
}
\]
where $\Phi$ is fully faithful and $\eta$ is the functor from $\mbox{Chow$(k)$}$ to the orbit category $\mbox{Chow$(k)$}{/_{-\otimes\mathds{1}(1)}}$. Moreover, we have the following result which relates the Voevodsky's nilpotence conjecture and noncommutative motives:

\begin{namedthm}[{BMT}]\label{BMT1}
Let $X$ be a smooth projective $k$-scheme. The conjecture $\mbox{V}(X)$ holds if and only if the conjecture \emph{{$\mbox{V}_{\tiny\mbox{nc}}(\mbox{perf}_{\tiny\mbox{dg}}(X))$}} holds.
\end{namedthm}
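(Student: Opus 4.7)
The plan is to exploit the commutative square from Section~4.5 comparing $\mathfrak{h}$ with $U \circ \mbox{perf}_{\tiny\mbox{dg}}$, together with the fact that the functor $\Phi$ from the orbit category $\mbox{Chow}(k)/_{-\otimes \mathds{1}(1)}$ to $\mbox{NChow}(k)$ is fully faithful, to transport the equivalence of the two conjectures into an identification of the quotient groups in which they live. Both $V(X)$ and $V_{\tiny\mbox{nc}}(\mbox{perf}_{\tiny\mbox{dg}}(X))$ can be read off from Hom-groups out of the $\otimes$-unit. On the commutative side one has
$$\mathcal{Z}^{*}(X)_F/\!\sim_{\tiny\mbox{rat}} \;=\; \bigoplus_{i\ge 0}\mbox{Hom}_{\tiny\mbox{Chow}(k)_F}(\mathds{1},\mathfrak{h}(X)(i)),$$
and the quotients by smash-nilpotence and numerical equivalence translate into quotients of these Hom-groups by the two-sided ideals of $\otimes$-nilpotent and numerically trivial morphisms in $\mbox{Chow}(k)_F$. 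On the noncommutative side, $K_0(\mbox{perf}_{\tiny\mbox{dg}}(X))_F = \mbox{Hom}_{\tiny\mbox{NChow}(k)_F}(U(k),U(\mbox{perf}_{\tiny\mbox{dg}}(X)))$, and again the two relevant quotients are by the $\otimes$-nilpotent and numerically trivial ideals in $\mbox{NChow}(k)_F$.

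Next, I would show that $\Phi\cdot\eta$ induces a canonical isomorphism
$$\bigoplus_{i\ge 0}\mbox{Hom}_{\tiny\mbox{Chow}(k)_F}(\mathds{1},\mathfrak{h}(X)(i)) \;\cong\; \mbox{Hom}_{\tiny\mbox{NChow}(k)_F}(U(k),U(\mbox{perf}_{\tiny\mbox{dg}}(X))).$$
The orbit functor $\eta$ is precisely what collapses Tate twists and turns the graded direct sum into a single Hom-group, while full faithfulness of $\Phi$ identifies the resulting group with its image in $\mbox{NChow}(k)_F$, which is $K_0$. Concretely, this identification is implemented (after $-\otimes_\mathbb{Z}F$) by the Chern character isomorphism $K_0(X)_F \simeq \bigoplus_i \mbox{CH}^i(X)_F$; the same argument applies relatively to identify morphism spaces between $\mathfrak{h}(X)$ and $\mathfrak{h}(Y)$ with morphisms in $\mbox{NChow}(k)_F$ between the corresponding dg enhancements.

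Third, I would verify that this isomorphism intertwines the two equivalence relations on each side. For smash-nilpotence this is essentially formal: $\Phi\cdot\eta$ and $U$ are symmetric monoidal, so $\alpha^{\otimes n}=0$ in $\mbox{Chow}(k)_F$ if and only if $(\Phi\cdot\eta(\alpha))^{\otimes n}=0$ in $\mbox{NChow}(k)_F$. For numerical equivalence one compares the intersection pairing on $\mbox{CH}^{*}(X)_F$ with the Euler form $\chi$ on $K_0$ via Hirzebruch-Riemann-Roch:
$$\chi(M,N)=\int_X \mbox{ch}(M^\vee)\cdot\mbox{ch}(N)\cdot\mbox{td}(X).$$
Because $\mbox{td}(X)$ is a unit in $\mbox{CH}^{*}(X)_F$ and $\mbox{ch}$ is an $F$-linear ring isomorphism, $\chi$-triviality of an element of $K_0(\mbox{perf}_{\tiny\mbox{dg}}(X))_F$ is equivalent to numerical triviality (with respect to the intersection pairing) of its image in $\bigoplus_i \mbox{CH}^i(X)_F$. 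Once both equivalence relations are matched, full faithfulness of $\Phi$ finishes the argument.

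The main obstacle is exactly the compatibility in the third step between the two flavours of numerical equivalence: classically one works codimension by codimension with the intersection pairing, whereas on the noncommutative side $\chi$ mixes all codimensions through the Todd class. Unpacking Hirzebruch-Riemann-Roch carefully enough, and checking that numerical triviality behaves well with respect to the orbit construction $\eta$ (so that, for arbitrary $Y$, a morphism $\mathfrak{h}(X)\to\mathfrak{h}(Y)$ is numerically trivial iff the induced morphism in $\mbox{NChow}(k)_F$ is), is the technical heart; the smash-nilpotence half and the Chern-character bookkeeping are comparatively formal.
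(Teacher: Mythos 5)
This statement is not actually proved in the paper: it is quoted from \cite{BMT} (Theorem 1.1 there) and used as a black box, so there is no internal proof to compare yours against. That said, your outline reconstructs, in substance, the argument given by Bernardara--Marcolli--Tabuada themselves: realize cycle groups and $K_0$ as Hom-groups out of the $\otimes$-unit, use the fully faithful symmetric monoidal functor $\Phi$ out of the orbit category $\mbox{Chow}(k)_F{/_{-\otimes\mathds{1}(1)}}$ to identify $\bigoplus_i \mbox{CH}^i(X)_F$ with $K_0(\mbox{perf}_{\tiny\mbox{dg}}(X))_F$ (on these Hom-groups $\Phi\cdot\eta$ is exactly the Chern character, by Grothendieck--Riemann--Roch), and then match the $\otimes$-nilpotence and numerical ideals on the two sides, the numerical case resting on the Hirzebruch--Riemann--Roch comparison of $\chi$ with the intersection pairing and the invertibility of $\mbox{td}(X)$ --- precisely the step you single out as the technical heart, which in \cite{BMT} is itself not redone from scratch but imported from earlier work of Marcolli--Tabuada comparing numerical (noncommutative) Chow motives. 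The one point your sketch leaves implicit is that the whole bridge requires $\mathbb{Q}\subseteq F$: the functor $\Phi$ exists only with rational coefficients and $\mbox{ch}$ is an isomorphism only after tensoring with $\mathbb{Q}$, a hypothesis the paper also suppresses when it allows $F$ to be an arbitrary commutative ring.
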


\section{Kuznetsov category and GM category}

In this section we recall some facts about the decomposition of the derived category of a cubic fourfold $X$ (resp.\ of a GM variety). In particular, we remark some properties about the Kuznetsov category (resp.\ the GM category) $\mathcal{A}_X$ associated to $X$. Then we prove Voevodsky's nilpotence conjecture for the Kuznetsov category of a cubic fourfold and for the GM category of an ordinary generic GM fourfold.

\subsection{Kuznetsov category}
Let $X$ be a cubic fourfold. The derived category of perfect complexes $\mbox{perf}(X)$ admits a semiorthogonal decomposition given by 
\begin{align*}\tag{$\star$}
\mbox{perf}(X)=\langle\mathcal{A}_X,\mathcal{O}_X,\mathcal{O}_X(H),\mathcal{O}_X(2H)\rangle,
\end{align*}
where $H$ is a hyperplane section and $\mathcal{A}_X$ is defined as:
\begin{align*}
\mathcal{A}_X&=\langle\mathcal{O}_X,\mathcal{O}_X(H),\mathcal{O}_X(2H)\rangle^{\perp}\\
&=\mathcal{f}E\in\mbox{perf}(X)\mbox{ s.t. $\mathbb{R}\mbox{Hom}_{\tiny\mbox{perf}(X)}(\mathcal{O}_X(i),E)=0$\mbox{ for $i=0,1,2$}}\mathcal{g}
\end{align*}
We call $\mathcal{A}_X$ the Kuznetsov category. 

We recall that the triangulated subcategory $\mathcal{A}_X$ is a Calabi-Yau category of dimension $2$; indeed, the Serre functor is equal to the shift $\mbox{-}[2]$, i.e. for every pair of objects $F,E$ we have
$$\mathbb{R}\mbox{Hom}_{\mathcal{A}_X}(E,F)^*\simeq\mathbb{R}\mbox{Hom}_{\mathcal{A}_X}(F,E)[2].$$
Moreover, $\mathcal{A}_X$ has the same sized Hochschild (co)homology of the derived category of a K3 surface.\ Thus, the Kuznetsov category is a noncommutative K3 surface in the sense of Kontsevich (see \cite{Kuz}, \cite{Kuz3}, Corollary 4.3 and \cite{Kuz2}, Proposition 4.1).

\begin{rem}
We recall that if $X$ is a cubic fourfold containing a plane, we can prove $\mbox{V}$-conjecture via noncommutative motives.
In fact, if $X$ contains a plane, we have that $\mathcal{A}_X$ is equivalent to $\mathcal{D}^b(S,\mathcal{B})$, where $S$ is a K3 surface, $\mathcal{B}$ is a sheaf of Azumaya algebras on $S$ and $\mathcal{D}^b(S,\mathcal{B})$ is the derived category of coherent $\mathcal{B}$-modules on $S$ (see \cite{Kuz}, Theorem 4.3). Then by \cite{TaVdB} we have the following decomposition in $\mbox{NChow$(k)$}$: 
$$U(\mbox{perf}_{\tiny\mbox{dg}}(X)) \simeq U(\mbox{perf}_{\tiny\mbox{dg}}(S)) \oplus U(\mathbb{C})\oplus U(\mathbb{C})\oplus U(\mathbb{C}).$$ Since $\mbox{V}(S)$ holds, we conclude that also $V(X)$ holds, as we claimed.
\end{rem}

\subsection{GM category}
Let $X$ be a GM $n$-fold; in \cite{KuPe}, Proposition 4.2, they proved that its derived category of perfect complexes has a semiorthogonal decomposition of the form
\begin{equation*}\tag{$\ast$}
\mbox{perf}(X)=\langle\mathcal{A}_X,\mathcal{O}_X,\mathscr{U}^{\vee}_X,\mathcal{O}_X(H),\mathscr{U}^{\vee}_X(H),...,\mathcal{O}_X((n-3)H),\mathscr{U}^{\vee}_X((n-3)H)\rangle,
\end{equation*}
where $\mathscr{U}^{\vee}_X$ is the dual of the Gushel bundle previously defined and $\mathcal{A}_X$ is defined as:
\begin{align*}
\mathcal{A}_X&=\langle\mathcal{O}_X,\mathscr{U}^{\vee}_X,\mathcal{O}_X(H),\mathscr{U}^{\vee}_X(H),...,\mathcal{O}_X((n-3)H),\mathscr{U}^{\vee}_X((n-3)H)\rangle^{\perp}.
\end{align*}
We call $\mathcal{A}_X$ the GM category of $X$.

Assume that $X$ is a GM fourfold. Then, the Serre functor on $\mathcal{A}_X$ is the shift by two and the Hochschild cohomology of $\mathcal{A}_X$ is isomorphic to that of a K3 surface. In other words, the GM category of a GM fourfold is a noncommutative K3 surface in the sense of Kontsevich (see \cite{KuPe}, Proposition 5.18).

\subsection{Proof of conjecture $V_{\text{nc}}$ for the Kuznetsov category and the GM category of a generic GM fourfold}
\begin{B}[B]\label{B}
Let $X$ be a cubic fourfold or an ordinary generic GM fourfold. Then \emph{$V_{\tiny\mbox{nc}}(U(\mathcal{A}_X^{\tiny\mbox{dg}}))$} holds, where $\mathcal{A}_X^{\tiny\mbox{dg}}$ is the dg enhancement of $\mathcal{A}_X$ induced from \emph{$\mbox{perf}_{\tiny\mbox{dg}}(X)$}.
\end{B}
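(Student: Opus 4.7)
The strategy is to combine Theorem (A) with the Bernardara–Marcolli–Tabuada bridge and then extract the $\mathcal{A}_X$-summand from the semiorthogonal decomposition of $\mathrm{perf}(X)$. More precisely, since $X$ is either a cubic fourfold or an ordinary generic GM fourfold, Theorem (A) gives $V(X)$, and Theorem (BMT) then upgrades this to $V_{\tiny\mbox{nc}}(\mbox{perf}_{\tiny\mbox{dg}}(X))$. It therefore suffices to show that the conjecture $V_{\tiny\mbox{nc}}$ descends from $\mbox{perf}_{\tiny\mbox{dg}}(X)$ to the summand $\mathcal{A}_X^{\tiny\mbox{dg}}$ carved out by the exceptional collection.

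To do this, I would use the semiorthogonal decompositions $(\star)$ and $(\ast)$ recalled in Section~4. In the notation of the last remark of Section~3, these translate into a direct sum decomposition in $\mbox{NChow}(k)$ of the form
\[
U(\mbox{perf}_{\tiny\mbox{dg}}(X))\;\simeq\;U(\mathcal{A}_X^{\tiny\mbox{dg}})\;\oplus\;U(k)^{\oplus n},
\]
with $n=3$ in the cubic case and $n=4$ in the GM case (each exceptional object contributing a copy of $U(k)$). Passing to Grothendieck groups, this produces a splitting
\[
K_0(\mbox{perf}_{\tiny\mbox{dg}}(X))\;\simeq\;K_0(\mathcal{A}_X^{\tiny\mbox{dg}})\;\oplus\;\mathbb{Z}^{\oplus n},
\]
and the two relations $\sim_{\otimes_{\tiny\mbox{nil}}}$ and $\sim_{\otimes_{\tiny\mbox{num}}}$ are defined via tensor products and via the bilinear form $\chi$, both of which respect the direct sum decomposition of noncommutative motives. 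In particular, a class $[M]\in K_0(\mathcal{A}_X^{\tiny\mbox{dg}})$ is $\otimes$-nilpotent (resp.\ numerically trivial) in $K_0(\mathcal{A}_X^{\tiny\mbox{dg}})$ if and only if its image in $K_0(\mbox{perf}_{\tiny\mbox{dg}}(X))$ is.

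Granting the compatibility of the two equivalence relations with the orthogonal splitting, the conclusion is immediate: an element of $K_0(\mathcal{A}_X^{\tiny\mbox{dg}})$ numerically trivial is, via the embedding, numerically trivial in $K_0(\mbox{perf}_{\tiny\mbox{dg}}(X))$, hence $\otimes$-nilpotent there by $V_{\tiny\mbox{nc}}(\mbox{perf}_{\tiny\mbox{dg}}(X))$, and finally $\otimes$-nilpotent in $K_0(\mathcal{A}_X^{\tiny\mbox{dg}})$ by the compatibility above. The reverse inclusion is automatic since $\otimes_{\tiny\mbox{nil}}$ always implies $\otimes_{\tiny\mbox{num}}$. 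This yields $V_{\tiny\mbox{nc}}(U(\mathcal{A}_X^{\tiny\mbox{dg}}))$.

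The main technical point, and the step I expect to require the most care, is the second-to-last: verifying that both $\otimes$-nilpotence and numerical equivalence are stable under direct summands in $\mbox{NChow}(k)$. Nilpotence is easy because $(U(\mathcal{A}_X^{\tiny\mbox{dg}}))^{\otimes m}$ is a direct summand of $U(\mbox{perf}_{\tiny\mbox{dg}}(X))^{\otimes m}$; the more delicate part is the numerical side, where one needs that the bilinear form $\chi$ on $K_0(\mbox{perf}_{\tiny\mbox{dg}}(X))$ restricts to the bilinear form on $K_0(\mathcal{A}_X^{\tiny\mbox{dg}})$, so that a class in $K_0(\mathcal{A}_X^{\tiny\mbox{dg}})$ pairing trivially with all of $K_0(\mathcal{A}_X^{\tiny\mbox{dg}})$ in fact pairs trivially with all of $K_0(\mbox{perf}_{\tiny\mbox{dg}}(X))$. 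This follows from the semiorthogonality: an object in $\mathcal{A}_X$ has zero $\mathrm{Ext}$'s against each $\mathcal{O}_X(iH)$ (resp.\ each $\mathscr{U}_X^\vee(iH)$), so its pairing against the exceptional summands vanishes. Once this semiorthogonality translation is made precise, the argument closes.
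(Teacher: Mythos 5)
Your proposal follows the same route as the paper's proof: decompose $U(\mbox{perf}_{\tiny\mbox{dg}}(X))$ in $\mbox{NChow}(k)$ via $(\star)$, resp.\ $(\ast)$, feed Theorem (A) through Theorem (BMT) to get $\mbox{V}_{\tiny\mbox{nc}}(\mbox{perf}_{\tiny\mbox{dg}}(X))$, and descend to the summand $U(\mathcal{A}_X^{\tiny\mbox{dg}})$; the paper compresses this descent into the words ``straightforward consequence of Theorem (A)'', and your write-up supplies the missing details. The nilpotence half of your compatibility check is fine: $U(\mathcal{A}_X^{\tiny\mbox{dg}})^{\otimes m}$ is a direct summand of $U(\mbox{perf}_{\tiny\mbox{dg}}(X))^{\otimes m}$, so if $[M\times m]$ vanishes in $K_0(\mbox{perf}_{\tiny\mbox{dg}}(X)^{\otimes m})$ it already vanishes in $K_0((\mathcal{A}_X^{\tiny\mbox{dg}})^{\otimes m})$.

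Your justification of the numerical half, however, invokes the Ext-vanishing in the wrong direction. Semiorthogonality in $(\star)$ says $\mathbb{R}\mbox{Hom}(\mathcal{O}_X(iH),A)=0$ for $A\in\mathcal{A}_X$, i.e.\ $\chi([E],[M])=0$ for every exceptional class $[E]$ and every $[M]\in K_0(\mathcal{A}_X^{\tiny\mbox{dg}})$; it does \emph{not} give $\chi([M],[E])=0$. Indeed, for a cubic fourfold $\mbox{Hom}(A,\mathcal{O}_X(iH)[j])$ is Serre-dual to $\mbox{Hom}(\mathcal{O}_X((i+3)H),A[4-j])$, and $\mathcal{O}_X((i+3)H)$ is not among the exceptional objects, so nothing forces this to vanish (similarly in the GM case). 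Since the paper defines numerical triviality of $[M]$ as the vanishing of $\chi([M],-)$, the pairings you must kill are exactly the uncontrolled ones. The gap is closed by the fact recorded in Section 3 that the left and right kernels of $\chi$ coincide for a smooth proper dg category: numerical triviality of $[M]$ in $K_0(\mathcal{A}_X^{\tiny\mbox{dg}})$ places $[M]$ in the right kernel there; then for $[N]=[N_A]+\sum_i a_i[E_i]$ one gets $\chi([N],[M])=\chi([N_A],[M])=0$, so $[M]$ lies in the right kernel of $\chi$ on $K_0(\mbox{perf}_{\tiny\mbox{dg}}(X))$, hence also in its left kernel, i.e.\ it is numerically trivial upstairs and your argument proceeds. (Alternatively one can bypass $\chi$ altogether: both equivalence relations are induced by $\otimes$-ideals of $\mbox{NChow}(k)$, so both quotients of $K_0(-)=\mbox{Hom}(U(k),U(-))$ split along the direct sum decomposition, and injectivity of the comparison map for the sum implies it for each summand.) With that correction your proof is complete and coincides with the paper's.
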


\begin{proof}
Let $X$ be a cubic fourfold. Using the decomposition $\star$, we have that the dg enhancement of the triangulated category $\mbox{perf}(X)$ admits the following decomposition in NChow($k$):
$$U(\mbox{perf}_{\tiny\mbox{dg}}(X))=U(\mathcal{A}_X^{\tiny\mbox{dg}}) \oplus U(\mathbb{C})\oplus U(\mathbb{C})\oplus U(\mathbb{C}).$$ Hence, the result is a straightforward consequence of \textbf{Theorem} (A).

The proof in the case of an ordinary generic GM fourfold $X$ is analogous, applying the decomposition $(\ast)$ and \textbf{Theorem} (A).
\end{proof}

\begin{rem}
We point out that \textbf{Theorem} (B) holds for every cubic fourfold $X$ even if it does not contain a plane.
\end{rem}

\section{Voevodsky's nilpotence conjecture for GM fourfolds containing surfaces}

In this section we will prove Voevodsky's nilpotence conjecture for generic GM fourfolds containing a $\tau$-plane and for ordinary GM fourfolds containing a quintic del Pezzo surface. The proof of the conjecture in the first case provide an application of \textbf{Theorem} (B).

Let $X$ be a GM fourfold containing a $\tau$-plane $P$, i.e.\ a plane $P$ of the form $\G(2,V_3)$ for some $3$-dimensional subvector space $V_3$ of $V_5$. In \cite{KuPe}, Lemma 7.8, they proved that there exists a cubic fourfold $X'$ containing a smooth cubic surface scroll $T$ such that the blow-up of $X$ in $P$ is identified to the blow-up of $X'$ in $T$. More precisely, if $p: \tilde{X} \rightarrow X$ is the blow-up of $X$ along $P$ and $q$ is the regular map induced by the linear projection from $P$, then the diagram
\begin{equation}
\label{DC3}
\xymatrix{
&\ar[dl]_{p}\tilde{X}\ar[dr]^q&\\
X&& X'
}
\end{equation}
commutes and $q$ is identified with the blow-up of $X'$ along $T$. Moreover, they showed that if the GM fourfold $X$ does not contain a plane of the form $\p(V_1 \wedge V_4)$ for some subvectorspaces satisfying $V_1 \subset V_3 \subset V_4 \subset V_5$, then the cubic fourfold $X'$ is smooth.  We point out that this construction had already been described in \cite{DIM}. 

They also observed that a generic GM fourfold containing a $\tau$-plane does not contain a plane of the form $\p(V_1 \wedge V_4)$ as above; hence, the associated cubic fourfold $X'$ obtained with this geometric construction is smooth. In this case, they proved that there exists an equivalence of Fourier-Mukai type
\begin{equation}
\label{G}
\phi:\mathcal{A}_X\simeq\mathcal{A}_{X'}
\end{equation}
between the GM category of $X$ and the Kuznetsov category of $X'$ (see \cite{KuPe}, Theorem 1.3).

Using this construction and \textbf{Theorem} (B), we can prove the Voevodsky's nilpotence conjecture for this class of GM fourfolds.  
 
\begin{namedthm}[C]
\label{T}
Let $X$ be a generic GM fourfold containing a plane $P$ of type $\emph{Gr}(2,3)$. Then \emph{$V_{\tiny\mbox{nc}}(\mbox{perf}_{\tiny\mbox{dg}}(X))$} holds.
\end{namedthm}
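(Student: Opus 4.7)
The strategy is to reduce the claim to \textbf{Theorem} (B) for a suitable cubic fourfold via the Kuznetsov--Perry equivalence \eqref{G}, and then assemble the full noncommutative Voevodsky conjecture from the semiorthogonal pieces of $\mbox{perf}(X)$.

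First, since $X$ is a generic GM fourfold containing a $\tau$-plane $P$, the geometric construction recalled in diagram \eqref{DC3} produces a smooth cubic fourfold $X'$ together with the Fourier--Mukai equivalence $\phi:\mathcal{A}_X \simeq \mathcal{A}_{X'}$ of \eqref{G}. I would first observe that, being of Fourier--Mukai type, $\phi$ lifts to a Morita equivalence of the induced dg enhancements $\mathcal{A}_X^{\tiny\mbox{dg}}$ and $\mathcal{A}_{X'}^{\tiny\mbox{dg}}$, so that the two objects become isomorphic in $\mbox{NChow}(k)$:
$$U(\mathcal{A}_X^{\tiny\mbox{dg}}) \simeq U(\mathcal{A}_{X'}^{\tiny\mbox{dg}}).$$

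Next, \textbf{Theorem} (B) applied to the smooth cubic fourfold $X'$ gives $V_{\tiny\mbox{nc}}(\mathcal{A}_{X'}^{\tiny\mbox{dg}})$. Since the $\otimes$-nilpotence and numerical equivalence relations on $K_0$ are invariants of objects in $\mbox{NChow}(k)$, the isomorphism above transports this statement to $V_{\tiny\mbox{nc}}(\mathcal{A}_X^{\tiny\mbox{dg}})$.

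Finally, the semiorthogonal decomposition $(\ast)$ for the GM fourfold $X$ yields in $\mbox{NChow}(k)$ the splitting
$$U(\mbox{perf}_{\tiny\mbox{dg}}(X)) \simeq U(\mathcal{A}_X^{\tiny\mbox{dg}}) \oplus U(\mathbb{C})^{\oplus 4},$$
exactly as in the proof of \textbf{Theorem} (B) but with four exceptional objects instead of three. Both equivalence relations behave additively on direct sums of Grothendieck groups, and $V_{\tiny\mbox{nc}}(\mathbb{C})$ is trivial; therefore the conjecture for $\mathcal{A}_X^{\tiny\mbox{dg}}$ propagates to the conjecture for $\mbox{perf}_{\tiny\mbox{dg}}(X)$, which is the required statement. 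The only genuinely subtle step is the first one: to verify that the equivalence of \cite{KuPe} is realized at the level of dg enhancements induced from $\mbox{perf}_{\tiny\mbox{dg}}(X)$ and $\mbox{perf}_{\tiny\mbox{dg}}(X')$, so that the passage to $\mbox{NChow}(k)$ is legitimate; once this is secured, the rest of the argument is formal and uses only the genericity of $X$ through the smoothness of the auxiliary cubic fourfold $X'$.
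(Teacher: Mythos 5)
Your proposal is correct and follows essentially the same route as the paper: invoke the Kuznetsov--Perry Fourier--Mukai equivalence $\phi:\mathcal{A}_X\simeq\mathcal{A}_{X'}$ with its dg lift (via T\"oen, Lunts--Schn\"urer, Schn\"urer), apply \textbf{Theorem} (B) to the smooth cubic fourfold $X'$, and propagate the statement through the semiorthogonal decomposition of $\mbox{perf}(X)$ with its four exceptional objects. Your write-up merely makes explicit the steps (isomorphism in $\mbox{NChow}(k)$, additivity of the two equivalence relations on direct sums) that the paper's terser proof leaves implicit.
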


\begin{proof}
The derived category of perfect complexes of $X$ has the following decomposition:
$$\mbox{perf}(X)=\langle\mathcal{A}_X,\mathcal{O}_X,\mathscr{U}^{\vee}_X,\mathcal{O}_X(H),\mathscr{U}^{\vee}_X(H)\rangle.$$ 
Since the functor $\phi$ defined in \eqref{G} is of Fourier-Mukai type, we know that $\phi$ has a dg lift, thanks to the works of \cite{LuS}, \cite{Sch} and \cite{Toe}. Then the proof is a consequence of \textbf{Theorem} (B).
\end{proof}

\begin{rem}
Alternatively, we can prove \textbf{Theorem} (C) by observing that the isomorphism of triangulated categories $\mathcal{A}_X\simeq \mathcal{A}_{X'}$ is induced by diagram \eqref{DC3}. Then, conjecture $V_{\tiny\mbox{nc}}(\mbox{perf}_{\tiny\mbox{dg}}(X))$ follows from Subsection \ref{subsec:b} and \textbf{Theorem} (A).   
\end{rem}

In a similar fashion, we can prove conjecture $\text{V}_{\text{nc}}$ for the category of perfect complexes of ordinary GM fourfolds containing a quintic del Pezzo surface.

\begin{namedthm}[D]
Let $X$ be an ordinary GM fourfold containing a quintic del Pezzo surface. Then \emph{$V_{\tiny\mbox{nc}}(\mbox{perf}_{\tiny\mbox{dg}}(X))$} holds.
\end{namedthm}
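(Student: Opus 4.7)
The plan is to mimic the strategy used to establish \textbf{Theorem} (C): produce a Fourier-Mukai equivalence between the GM category $\mathcal{A}_X$ and the derived category of a smooth projective surface (for which Voevodsky's nilpotence conjecture is already known), and then transport the conclusion through the semiorthogonal decomposition $(\ast)$ of $\mbox{perf}(X)$ for $n=4$.

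First, I would invoke the geometric/categorical input from the Gushel-Mukai literature: by work of Debarre-Iliev-Manivel and Kuznetsov-Perry, for an ordinary GM fourfold $X$ containing a quintic del Pezzo surface $\Sigma$, one obtains an associated K3 surface $Y$ together with an equivalence of Fourier-Mukai type $\mathcal{A}_X\simeq\mathcal{D}^b(Y)$. Applying the uniqueness of dg enhancements for Fourier-Mukai functors (Lunts-Schn\"urer, Sch\"urer, To\"en), exactly as in the proof of \textbf{Theorem} (C), this equivalence admits a dg lift yielding an isomorphism $U(\mathcal{A}_X^{\tiny\mbox{dg}})\simeq U(\mbox{perf}_{\tiny\mbox{dg}}(Y))$ in $\mbox{NChow}(k)$.

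Combining this with the semiorthogonal decomposition $(\ast)$ specialized to $n=4$ produces the decomposition
$$U(\mbox{perf}_{\tiny\mbox{dg}}(X))\simeq U(\mbox{perf}_{\tiny\mbox{dg}}(Y))\oplus U(\mathbb{C})^{\oplus 4}.$$
Since Voevodsky's conjecture $V(Y)$ holds for surfaces, in particular for the K3 surface $Y$, Theorem BMT gives $V_{\tiny\mbox{nc}}(\mbox{perf}_{\tiny\mbox{dg}}(Y))$. The summands $U(\mathbb{C})$ trivially satisfy $V_{\tiny\mbox{nc}}$. Hence $V_{\tiny\mbox{nc}}(\mbox{perf}_{\tiny\mbox{dg}}(X))$ follows, as required.

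The main obstacle is locating the precise form of the Fourier-Mukai equivalence $\mathcal{A}_X\simeq\mathcal{D}^b(Y)$ in the literature; this is the categorical analogue of diagram \eqref{DC3} that underlies \textbf{Theorem} (C). Should such a direct equivalence with a K3 surface not be on record in the form required, an alternative is to construct a blow-up diagram $X\xleftarrow{p}\tilde{X}\xrightarrow{q}X'$, where $q$ realizes $\tilde{X}$ as the blow-up of a variety $X'$ whose Voevodsky conjecture is known by \textbf{Theorem} (A), (B), or (C) (for instance a cubic fourfold, or a GM fourfold containing a $\tau$-plane). Conjecture $V_{\tiny\mbox{nc}}(\mbox{perf}_{\tiny\mbox{dg}}(X))$ would then follow by combining the blow-up formula from Subsection \ref{subsec:b} with Theorem BMT, exactly along the lines of the remark following \textbf{Theorem} (C).
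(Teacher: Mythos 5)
Your proposal is correct and follows essentially the same route as the paper: the paper's proof invokes Kuznetsov--Perry (Theorem 1.2 of \cite{KuPe}) for the Fourier--Mukai equivalence $\psi:\mathcal{A}_X\simeq\mathcal{D}^b(Y)$ with $Y$ a K3 surface, notes that $\psi$ admits a dg lift, and concludes from $V(Y)$ via Theorem BMT, exactly as you do (your spelled-out decomposition $U(\mbox{perf}_{\tiny\mbox{dg}}(X))\simeq U(\mbox{perf}_{\tiny\mbox{dg}}(Y))\oplus U(\mathbb{C})^{\oplus 4}$ is implicit in the paper's argument). The equivalence you were unsure how to locate is precisely the cited Kuznetsov--Perry result, so your fallback blow-up argument is not needed.
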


\begin{proof}
By \cite{KuPe}, Theorem 1.2 we have that there exist a K3 surface $Y$ and an equivalence $\psi: \mathcal{A}_X\simeq \mathcal{D}^b(Y)$ of Fourier-Mukai type. Since $\psi$ has a dg lift and conjecture V holds for $Y$, the proof follows from Theorem \ref{BMT1}. 
\end{proof}

\begin{cor}
Let $X$ be a generic GM fourfold containing a plane $P$ of type $\emph{Gr}(2,3)$ or an ordinary GM fourfold containing a quintic del Pezzo surface.\ Then \emph{$V(X)$}\ 
holds.
\end{cor}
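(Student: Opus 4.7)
The plan is to deduce the corollary directly by combining Theorems (C) and (D) with the Bernardara--Marcolli--Tabuada reformulation of Voevodsky's conjecture stated in the excerpt as Theorem BMT.

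First I would split into the two cases according to the hypotheses on $X$. If $X$ is a generic GM fourfold containing a plane $P$ of type $\mathrm{Gr}(2,3)$, then Theorem (C) gives $V_{\tiny\mbox{nc}}(\mbox{perf}_{\tiny\mbox{dg}}(X))$. If instead $X$ is an ordinary GM fourfold containing a quintic del Pezzo surface, then Theorem (D) gives the same conclusion $V_{\tiny\mbox{nc}}(\mbox{perf}_{\tiny\mbox{dg}}(X))$.

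Next, I would apply Theorem BMT, which asserts that for any smooth projective $k$-scheme $X$ the classical Voevodsky conjecture $V(X)$ is equivalent to its noncommutative counterpart $V_{\tiny\mbox{nc}}(\mbox{perf}_{\tiny\mbox{dg}}(X))$. Since in either case the noncommutative version holds, we conclude $V(X)$.

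There is essentially no obstacle: the corollary is a formal consequence of the previously established results, since the hard input (namely the Fourier--Mukai equivalences $\mathcal{A}_X \simeq \mathcal{A}_{X'}$ with $X'$ a cubic fourfold, respectively $\mathcal{A}_X \simeq \mathcal{D}^b(Y)$ with $Y$ a K3 surface, together with the existence of dg-lifts and the proof of $V_{\tiny\mbox{nc}}$ for the Kuznetsov/K3 piece) has already been absorbed into Theorems (B), (C), and (D). The only thing to verify is that the hypotheses of Theorem BMT are met, which is immediate because $X$ is smooth and projective by assumption (GM fourfolds are smooth throughout the paper).
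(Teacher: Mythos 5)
Your proposal is correct and follows exactly the paper's argument: the authors also deduce the corollary by combining Theorems (C) and (D) with the BMT equivalence $V(X) \Leftrightarrow V_{\tiny\mbox{nc}}(\mbox{perf}_{\tiny\mbox{dg}}(X))$, which applies since $X$ is smooth and projective. Nothing further is needed.
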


\begin{proof}
The proof is a consequence of Theorem \ref{BMT1}.
\end{proof}

\end{document}